\numberwithin{equation}{section}
\newtheorem*{rep@theorem}{\rep@title}
\newcommand{\newreptheorem}[2]{%
\newenvironment{rep#1}[1]{%
 \def\rep@title{#2 \ref{##1}}%
 \begin{rep@theorem}}%
 {\end{rep@theorem}}}
\newtheorem{theorem}{Theorem}[section]
\newtheorem{lemma}[theorem]{Lemma}
\theoremstyle{definition}
\theoremstyle{remark}
\newcounter{thmenumerate}
\newcounter{xenumerate}
\newcommand\E{\operatorname{\mathbb E{}}}
\renewcommand\Pr{\operatorname{\mathbb P{}}}
\newcommand\Var{\operatorname{Var}}
\newcommand\Exp{\operatorname{Exp}}
\newcommand\eps{\varepsilon}
\renewcommand\phi{\varphi}
\newcommand\cR{\mathcal R}
\newcommand\Z{{\mathbb Z}}
\newcommand\R{{\mathbb R}}
\newcommand{\myvec}[1]{ \mathbf{#1} }
\newcommand{\mymat}[1]{ \boldsymbol{#1} }
\newcommand{\mygvec}[1]{ \boldsymbol{#1} }
\newcommand{\be}{\begin{equation}}
\newcommand{\ee}{\end{equation}}
\newcommand{\ba}{\begin{equation} \begin{aligned}}
\newcommand{\ea}{\end{aligned} \end{equation}}
\newcommand{\ddt}[1]{\frac{\mathrm{d}#1}{\mathrm{d}t}}
\begin{document}
\title[Extinction Times in the Stochastic Logistic Epidemic]{Extinction Times in the Subcritical Stochastic SIS Logistic Epidemic}

\date{\today} 

\author{Graham Brightwell}
\address{Department of Mathematics, London School of Economics,
Houghton Street, London WC2A 2AE, United Kingdom}
\email{g.r.brightwell@lse.ac.uk}
\urladdr{http://www.maths.lse.ac.uk/Personal/graham/}

\author{Thomas House} \thanks{Thomas House is supported by the EPSRC, grant
reference EP/N033701/1.}
\address{School of Mathematics, University of Manchester}
\email{thomas.house@manchester.ac.uk}
\urladdr{http://personalpages.manchester.ac.uk/staff/thomas.house/}

\author{Malwina Luczak} \thanks{The research of Malwina Luczak was largely carried out at Queen Mary, University of London, and was supported by
an EPSRC Leadership Fellowship, grant reference EP/J004022/2.}
\address{School of Mathematics and Statistics, University of Melbourne, Parkville, Victoria, Australia}
\email{mluczak@unimelb.edu.au}

\keywords{stochastic SIS logistic epidemic; birth-and-death chain; time to extinction; near-critical epidemic}
\subjclass[2000]{60J27, 92D30}

\begin{abstract}
Many real epidemics of an infectious disease are not straightforwardly super-
or sub-critical, and the understanding of epidemic models that exhibit such
complexity has been identified as a priority for theoretical work. We provide
insights into the near-critical regime by considering the stochastic SIS
logistic epidemic, a well-known birth-and-death chain used to model
the spread of an epidemic within a population of a given size~$N$.\\

\noindent{}We study
the behaviour of the process as the population size $N$ tends to infinity.  Our
results cover the entire subcritical regime, including the ``barely subcritical'' regime,
where the recovery rate exceeds the infection rate by an amount that tends to~0
as $N \to \infty$ but more slowly than $N^{-1/2}$.  We derive precise
asymptotics for the distribution of the extinction time and the total number of
cases throughout the subcritical regime, give a detailed description of the
course of the epidemic, and compare to numerical results for a range of
parameter values.  We hypothesise that features of the course of the epidemic
will be seen in a wide class of other epidemic models, and we use real data to
provide some tentative and preliminary support for this theory.
\end{abstract}

\maketitle

\section{Introduction}\label{Sintro}

\subsection{Epidemiological motivation}

Models of the dynamics of disease spread are widely used throughout infectious disease
epidemiology, and inform an increasing number of health policy
domains (Heesterbeek et al. 2015).  Typically, epidemic models have a quantity
called the basic reproduction ratio $\mathcal{R}_0$ such that, if $\cR_0 > 1$,
then the epidemic is supercritical and grows exponentially and, if $\cR_0 < 1$, then the
epidemic is subcritical and shrinks exponentially; see for instance Diekmann, Heesterbeek and Britton (2013).

Increasingly, however, epidemiologists are confronted with epidemics where the dynamics cross the threshold from supercritical to
subcritical (see for instance Klepac et al (2013) and other papers in that journal issue) due to control measures, or from subcritical to
supercritical due, for example, to mutation (Antia et al.\ 2003; Bull and Dykhuizen 2003; Scheffer et al.\ 2009; O'Regan and Drake 2013), or
where the behaviour is not exponential (Chowell et al.\ 2016).  This has led to the
understanding of near-critical epidemics being highlighted as a key challenge for
disease-dynamic models of infectious diseases (Britton et al.\ 2015).

In this paper, we are interested in understanding the course of subcritical epidemics, especially
where $\cR_0$ is close to~1.  We shall give a detailed analytical study of a particularly simple epidemic process,
the \emph{stochastic SIS logistic process} (also called the SIS model, the
contact process, or the logistic model) in the subcritical regime.  In this model, each member of a population of fixed size $N$
is either susceptible or infective.  Infective individuals encounter a random other member of the population at rate $\lambda$, and
infect them if they are susceptible: infective individuals recover at rate $\mu$, and once recovered they are immediately susceptible
again.  The state of the epidemic is effectively determined by the number $X_N(t)$ of infectives,
or by the {\em prevalence} $X_N(t)/N$, at time~$t$.  We defer the formal definitions to the next section.
For most purposes, this process is too simple to reflect the full behaviour of a real-world epidemic, but it can be suitable
for modelling sexually transmitted and hospital-acquired infections (Eames and Keeling 2002; Ross and Taimre 2007).

In our model, the basic reproduction ratio $\cR_0$ is equal to $\lambda/\mu$, and we are especially interested in the regime where
$\cR_0$ tends to~1 from below as $N$ tends to infinity.  When also $(1-\cR_0) N^{1/2} \to \infty$, we are in the
{\em barely subcritical regime}.  We note the following features of the typical course of the epidemic in this regime, all of which
distinguish regimes near criticality from those where $\cR_0$ is fixed and less than~1.  We make the
hypothesis that these features are common to a wide class of epidemic models (not only SIS models) in barely subcritical regimes.
We express our statements in terms of the basic reproduction ratio $\cR_0$, the population size $N$, and a speed parameter $\mu$,
where $1/\mu$ is roughly the expected duration of an individual case.
\begin{itemize}
\item The time to extinction is of order much larger than $\log N /\mu$ (typically it is of order $\log N /\mu(1-\cR_0)$).
\item There is a period of time before extinction, of order $1/\mu(1-\cR_0)$, where the number of infectives follows a track resembling a
random walk, remaining of order at most $1/(1-\cR_0)$ throughout.  The duration of this period is not well-concentrated.
\end{itemize}

The {\em cut-off phenomenon}, in the case of a stochastic epidemic process, is that the typical time to extinction is much greater than the window of time
over which the probability of extinction goes from near~0 (at this stage, the number of infectives will typically be larger than $1/(1-\cR_0)$, but
much smaller than the population size) to near~1.  The SIS logistic process exhibits this phenomenon in the subcritical regime,
with the expected time to extinction around $\log [N (1-\cR_0)^2] / \mu (1-\cR_0)$, and the window where extinction typically occurs having width of order
$1/\mu(1-\cR_0)$: thus cut-off becomes less pronounced as we approach the {\em critical regime} where $|1-\cR_0|$ is of order at most $N^{-1/2}$.
We hypothesise that this weakening of cut-off is also a feature of many barely subcritical epidemic processes.

Figure~\ref{fig:data} shows three real examples of the courses of epidemics after they became subcritical due to control efforts.
These concern: the recent Ebola epidemic in Sierra Leone (top plot), smallpox (middle plot),
and polio (bottom plot).  For all three examples, we superimpose curves showing smooth decay (in the top plot the curves are solutions of an ODE:
in the others, they are exponential decay curves at various rates $r$), illustrating that these provide a poor fit for the observed detailed dynamics of
the disease over time, especially in the end stages of the epidemic (see in particular the right hand panels in the bottom two plots, which are rescaled versions of the plots showing the behaviour more clearly).  However, at a glance, the plots do exhibit behaviour very broadly in line with
our expectations for a barely subcritical epidemic.  We discuss the data in more detail in Appendix~A.

For comparison, Figure~\ref{fig:mc} shows the behaviour of the stochastic SIS logistic process
for a population of size $10^6$ with $10^3$ initial cases as $\lambda$ is
increased towards $\mu=1$ from below.  These sample paths are compared to ``smooth decay'' curves of the form
$e^{(\lambda-\mu)t}$; the figure illustrates our analytic results for the model: individual realisations of the stochastic
SIS logistic process deviate significantly from smooth exponential decay in the final stages, and exhibit the behaviour we described above.

The potential implications of our work for more general epidemic models are discussed in more detail in Section~\ref{sec.mean-extinction}.

Our results on the stochastic SIS logistic process are the first to incorporate the
barely subcritical regime, where $\cR_0$ approaches $1$ from below as the population size tends to infinity.
We provide analytic methods for studying the SIS logistic process in the large-population limit,
and thereby obtain precise asymptotic results for
the distribution of extinction times and the total number of infection events.  We also give attention to numerical
approaches that are well adapted to the near-critical regime and allow the
exploration of model behaviour at a given value of the population size.
We expect that our methodology can be generalised to apply to other models of epidemiological and biological interest.

\subsection{Technical background and outline of paper}

The stochastic SIS logistic process is defined as follows.  Given a ``size
parameter'' $N$, and two further parameters $\lambda$ and $\mu$, let $X_N =
(X_N (t))_{t\ge 0}$ be the continuous-time Markov chain with state space $\{ 0,
\dots, N\}$, and transitions as follows:
\begin{eqnarray} \label{eq:eventsrates}
X &\to& X+1 \quad \mbox{ at rate } \quad \lambda X (1-X/N), \\
X &\to& X-1 \quad \mbox{ at rate } \quad \mu X. \nonumber	
\end{eqnarray}
This is the most basic stochastic model of the spread of an SIS
(susceptible-infective-susceptible) epidemic within a population of size $N$.
In this context, $X_N(t)$ represents the number of infective individuals at
time~$t$.  Each infective encounters a random other member of the population at
rate~$\lambda$; if the other individual is currently susceptible, they become
infective.  Also, each infective recovers at rate $\mu$; once they are
recovered they become susceptible again.  The stochastic SIS logistic process
is also used as a model for a metapopulation process, where $N$ represents the
number of available patches, $X_N(t)$ is the number of patches that are
populated at time~$t$, $\lambda$~represents the rate at which one existing
colony attempts to colonise another patch, and $\mu$ represents the rate at
which an entire patch becomes depopulated due to some catastrophe.  The model
was first formulated by Feller~(1939), and further studied by Bartlett~(1957).
It was rediscovered by Weiss and Dishon~(1971), and has since been investigated by many authors.  A recent thorough
treatment of the model is the book of N\aa sell~(2011), who in
particular mentions a number of other application areas and gives a large list
of further references.

Suppose to begin with that $\lambda$ and $\mu$ are fixed constants.  The key parameter is the ratio $\lambda/\mu$, which,
for a small-scale epidemic,
approximates the {\em basic reproduction ratio} $\cR_0$ of the epidemic, defined as the mean number of individuals infected by a single infective
in a large population of susceptibles.  The behaviour of the logistic process is
radically different depending on whether the quantity $\cR_0$ is greater or less than 1.  In the case $\cR_0 > 1$, the process typically takes a time
exponential in $N$ to die out, spending most of its duration near to the value $(\lambda - \mu) N / \lambda$ where the upward and downward
transition rates are equal: this models a {\em supercritical} epidemic, where an initially small number of infectives may generate an outbreak that
becomes endemic in the population for a very long period.  In the case $\cR_0 < 1$,
the process is always drifting downwards, and even an initially very large epidemic dies out with high probability in time of order $\log N$.
In this paper, we give precise results about the distribution of the extinction time in this {\em subcritical} case.
In the special case $\cR_0 = 1$, the expected extinction time is of order $\sqrt N$.

To study behaviour in the transition between the supercritical and subcritical
regimes, we regard $\lambda$ and $\mu$ as functions of the size parameter~$N$,
and we pay special attention to cases where $\cR_0 = \lambda/\mu$ tends to~1 as
$N$ tends to infinity.

Earlier work~(N\aa sell 1996; Dolgoarshinnykh and Lalley 2006; Kessler 2008) has demonstrated that there is a {\em critical window}
for the logistic process where $|\cR_0 - 1| = O(N^{-1/2})$; the change in the nature of the process
occurs as $\cR_0$ crosses this window.  Our main aim in this paper is to give the distribution of the extinction time throughout the entire subcritical
range, i.e., as long as $(1-\cR_0)N^{1/2} \to \infty$.  Our results show exactly how the extinction time changes from order $\log N$ to order
$\sqrt N$ as $\cR_0$ approaches~1 from below.

Our results for the case where $\lambda$ and $\mu$ are fixed real numbers with $\lambda < \mu$ are new,
but our main interest is in the case where $\lambda=\lambda(N)$ and $\mu=\mu(N)$ are functions of $N$ with
$\mu(N)-\lambda(N) \to 0^+$.  We shall always assume that $\mu(N)$ and $\lambda(N)$ are bounded away from both~0 and~$\infty$.
We choose to state all our results in terms of the two parameters $\lambda$ and $\mu$ for ease of comparison to earlier
results: however, apart from a constant factor determining the speed of the epidemic, all of our results could be restated in terms of $\cR_0$.
Note that, under our assumptions, $1 - \cR_0 = (\mu -\lambda)/\mu$ is of the same order as $\mu-\lambda$, and to say that our sequence of parameter
values is in the {\em subcritical regime} means that $(\mu-\lambda)N^{1/2} \to \infty$.

We also allow the initial state $X_N(0)$ to depend on~$N$.  One case of natural interest is where $X_N(0) \simeq \alpha N$ for some
$\alpha \in (0,1]$, but our results also cover the case where $X_N(0)/N \to 0$.
We set $T_N$ to be the time to extinction (i.e., the hitting time of the absorbing state~0) for $X_N(t)$, with infection rate $\lambda = \lambda (N)$, recovery rate $\mu = \mu(N)$, and initial state $X_N(0)$.  Our interest is in the asymptotic distribution of $T_N$,
as $N \to \infty$.

There is an exact expression for $\E T_N$ as a double summation, due to Weiss and Dishon~(1971) in the case where $X_N(0) = N$, and in general to
Leigh~(1981) and Norden~(1982).  The asymptotics of this sum have been determined in some cases, e.g., by Doering, Sargsyan and Sander (2005).
Our methods give precise information about the distribution of $T_N$, not just its expectation.

The logistic process is naturally associated with the differential equation
\begin{equation} \label{eq.diff-eq}
	\ddt{x} = \lambda x (1-x) - \mu x = \lambda x(1 - \mu/\lambda - x),
\end{equation}
where $x(t)$ represents the {\em proportion} of infective individuals at time $t$.  This equation was first studied by Verhulst~(1838), and it
is known as the {\em Verhulst equation} or {\em logistic equation}.  It follows from the general theory of Kurtz~(1971) that, as $N \to \infty$,
$X_N (t)/N$ is well concentrated around the solution $x(t)$ of the differential equation~(\ref{eq.diff-eq}), uniformly over fixed
time intervals, as long as $X_N(0)/N$ is well approximated by its initial condition $x(0)$.  For our purposes, we need to show concentration for
longer periods, and this is possible thanks to the special structure of the logistic equation when $\mu \ge \lambda$.

The behaviour of the deterministic process $x(t)$ also depends on whether $\cR_0$ is greater than, equal to, or less than~1.
In the case where $\lambda > \mu$ (i.e., $\cR_0 > 1$), there is a stable fixed point of the drift equation~(\ref{eq.diff-eq}) at $x = 1-\mu/\lambda$
(and an unstable fixed point at $x=0$).  If there are a large number of infective individuals at time~0, then with high
probability $X_N (t)/N$ heads rapidly towards the stable fixed point, then spends most of its time in the neighbourhood of that fixed point,
making excursions into the rest of the state space until eventually one of these excursions reaches the absorbing state~0.  Precise results are known about
the distribution of the time to extinction, which is exponential in $N$, and about the quasi-stationary
distribution, which is centred around the stable fixed point of~(\ref{eq.diff-eq}).  See, for instance: Barbour~(1976), Kryscio and Lef\`evre~(1989),
N\aa sell~(1996), Andersson and Djehiche~(1998) and the book of N\aa sell~(2011).

If $\lambda \le \mu$, then the differential equation~(\ref{eq.diff-eq}) has a single stable fixed point at~$x=0$, and all its solutions converge to zero
as $t\to \infty$.  For the corresponding Markov chain, it is also known that the epidemic dies out rapidly with high probability whenever
$\lambda$ and $\mu$ are fixed constants with $\lambda \le \mu$.

Doering, Sargsyan and Sander~(2005) give an asymptotic formula for the mean extinction time, in the case where $\lambda < \mu$ are fixed constants and
the initial state $X_N(0)$ is of order $N$:
\begin{equation} \label{dss}
\E T_N = \frac{1}{\mu-\lambda} (\log N + O(1)).
\end{equation}
In the case where $\lambda = \mu$, they obtain:
$$
\E T_N = \frac{1}{\lambda} \left[ \left( \frac{\pi}{2} \right)^{3/2} \sqrt N + \log N \right] + O(1).
$$
Doering, Sargsyan and Sander~(2005) also study the mean time to extinction starting from a state with a single infective individual, and
Kessler~(2008) extends these results to cover the whole of the ``transition region'', where $\mu-\lambda$ is of order $N^{-1/2}$.

A formula for the asymptotic distribution of the time $T_N$ to extinction, in the case where $\lambda < \mu$ and $X_N(0)/N$
tends to a constant, is presented by Kryscio and Lef\`evre~(1989) with a heuristic argument, and then reproduced by Andersson and Djehiche~(1998).
However, the formula is erroneous.  It was noted by Doering, Sargsyan and Sander~(2005) that the formula given by Kryscio and Lef\`evre~(1989) and
Andersson and Djehiche~(1998) is inconsistent with their result (\ref{dss}), and with their numerical results.
As far as we are aware, no correct explicit formula for the asymptotic distribution of the time $T_N$ to extinction when $\lambda < \mu$ has appeared
in the literature, even in the case where $\lambda$ and $\mu$ are fixed constants.  In his book,
N\aa sell~(2011) identifies two distinct regimes: one ``critical regime'', where $\mu-\lambda$ is of order at most $N^{-1/2}$, and another
(subcritical) where $\mu-\lambda$ is constant or tends to zero more slowly than $N^{-1/2}$.  For both regimes, N\aa sell~(2011) poses as an open
problem the determination of the mean extinction time $\E T_N$.  Our results have some similarity with Theorem~2(ii) of Sagitov and Shaimerdenova (2013), who study the distribution of the extinction time for a different version of the logistic model in a completely different limit.

Barbour, Hamza, Kaspi and Klebaner~(2015) study a
very general class of population models, which includes this one.  The distribution of the extinction time $T_N$, in the case where $\lambda$ and $\mu$ are
fixed with $\lambda < \mu$, can, with some effort, be derived from their Theorem~1.2.  Our results cover the case of fixed $\lambda$ and $\mu$, as well as the near-critical case (which Barbour et al (2015) do not cover), and our proof for this model
is significantly simpler than the general argument given by Barbour et al (2015).

Here we obtain the asymptotic distribution of $T_N$ throughout the subcritical regime, for general initial conditions.
Our main result is as follows.  We recall that a random variable $W$ has the {\em standard Gumbel distribution} if $\Pr(W \le w) = e^{-e^{-w}}$
for all $w \in \R$.  The mean of $W$ is equal to Euler's constant $\gamma \approx 0.5772$.

\begin{theorem} \label{thm.main}
Suppose that $\mu = \mu(N)$ and $\lambda = \lambda(N)$ are bounded away from both~0 and infinity.
Suppose also that $(\mu - \lambda) N^{1/2} \to \infty$ as $N \to \infty$, that $X_N(0)$ is non-random and that $X_N(0)(\mu-\lambda) \to \infty$ as
$N \to \infty$. Then, as $N \to \infty$,
\begin{equation} \label{eq.general}
(\mu-\lambda) T_N - \Big( \log N + 2 \log (\mu-\lambda) - \log\Big( 1 + \frac{(\mu-\lambda) N}{\lambda X_N(0)}\Big) - \log \mu -\log \lambda \Big) \to W,
\end{equation}
in distribution, where $W$ is a standard Gumbel variable.
Hence, as $N \to \infty$,
$$
\E  T_N = \frac{\log N + 2 \log (\mu-\lambda) - \log\big( 1 + \frac{(\mu-\lambda) N}{\lambda X_N(0)}\big) - \log \mu -\log \lambda + \gamma + o(1)}{\mu-\lambda}.
$$
\end{theorem}

\medskip

Observe that
$$
\log N + 2 \log (\mu-\lambda) - \log \left( 1 + \frac{(\mu-\lambda) N}{\lambda X_N(0)}\right)
= - \log \left( \frac{1}{N(\mu-\lambda)^2} + \frac{1}{\lambda X_N(0) (\mu-\lambda)} \right),
$$
which tends to infinity under the hypotheses of the theorem. The remaining terms, $\log \mu$ and $\log \lambda$, in the expression in~(\ref{eq.general}) are of constant order, and so Theorem~\ref{thm.main} implies that, for any fixed $\eps > 0$, the probability of extinction before time
$$
(1-\eps) \frac{\log N + 2 \log (\mu-\lambda) - \log\big( 1 + \frac{(\mu-\lambda) N}{\lambda X_N(0)}\big)}{\mu-\lambda}
$$
tends to zero, while the probability of extinction by time
$$
(1+\eps) \frac{\log N + 2 \log (\mu-\lambda) - \log\big( 1 + \frac{(\mu-\lambda) N}{\lambda X_N(0)}\big)}{\mu-\lambda}
$$
tends to~1.  This is an instance of the cut-off phenomenon (see for instance: Diaconis 1996; Levin, Peres and Wilmer 2009), where, for a Markov chain $(X_N(t))$, the total variation distance between the distribution of $X_N(t)$ and the stationary distribution moves rapidly from~1 to~0 over a time
interval much smaller than the
time to stationarity.  In our instance, the support of the stationary distribution is $\{0\}$, so the total variation distance at time $t$ is the
probability that the epidemic is not yet extinct.  This probability goes from near~1 to near~0 over a time interval of length of order $1/(\mu-\lambda)$,
whereas the expected extinction time, from a large enough initial state, is of order $\log[N(\mu-\lambda)^2]/(\mu-\lambda)$.  Thus the cut-off phenomenon
becomes less pronounced as $N(\mu-\lambda)^2$ tends more slowly to infinity, i.e., as we approach the critical regime.  See Figures~\ref{fig:unscaled}
and~\ref{fig:scaled}.

Another point to note is how the expected extinction time changes as $\mu-\lambda$ decreases, showing the transition between the
subcritical and critical regimes. When $\mu-\lambda$ is of constant order, the expected extinction time is $(1+o(1)) \log N / (\mu-\lambda)$;
as $(\mu - \lambda) N^{1/2}$ tends to infinity more and more slowly, the expected extinction time grows almost as large as $N^{1/2}$.

We next give versions of~(\ref{eq.general}) valid when $X_N(0)/N$ lies in certain ranges, assuming always that $(\mu(N)-\lambda(N)) N^{1/2} \to \infty$
and $X_N(0)(\mu(N) - \lambda(N)) \to \infty$.
One important special case is when $X_N(0)/N \to \alpha$, with $\alpha \in (0,1]$, when Theorem~\ref{thm.main} gives that, for a standard Gumbel random variable $W$,
\begin{equation} \label{eq.inter}
(\mu-\lambda) T_N - \big( \log N + 2\log (\mu-\lambda) + \log \alpha - \log(\lambda \alpha + \mu -\lambda) - \log \mu \big) \to W,
\end{equation}
in distribution, as $N \to \infty$. 

In general, (\ref{eq.general}) is the most that can be said if
$\mu -\lambda$ is of the same order as $X_N(0)/N$
(e.g., if both are constants).  On either side of this regime, the formula in (\ref{eq.general}) can be simplified.

For instance, if $X_N(0)/N(\mu-\lambda) \to \infty$ (so $X_N(0)/N$ is asymptotically of larger order than $\mu - \lambda$), 
then
\begin{equation} \label{eq.high}
(\mu-\lambda) T_N - \big( \log N + 2\log (\mu-\lambda) - \log \mu -\log \lambda \big) \to W,
\end{equation}
in distribution, as $N \to \infty$, where $W$ has the standard Gumbel distribution.
In (\ref{eq.high}), necessarily $\mu-\lambda \to 0$, so either of the terms $\log \mu$
and $\log \lambda$ could be replaced by the other.

Note that, for any $X$,
$$
\log N + \log (\mu-\lambda) - \log\big( 1 + \frac{(\mu-\lambda) N}{\lambda X}\big) -\log \lambda
= \log X - \log\big( \frac{\lambda X}{(\mu-\lambda)N} + 1 \big),
$$
and so an equivalent form of (\ref{eq.general}) is
\begin{equation} \label{eq.equiv}
(\mu-\lambda) T_N - \Big( \log X_N(0) + \log (\mu-\lambda) - \log\Big( 1 + \frac{\lambda X_N(0)}{(\mu-\lambda) N}\Big) - \log \mu \Big) \to W.
\end{equation}
It follows that, if $X_N(0)/N(\mu - \lambda) \to 0$ (so $X_N(0)/N$ is asymptotically of smaller order than $\mu - \lambda$)
and $X_N(0)(\mu-\lambda) \to \infty$, then
\begin{equation} \label{eq.low}
(\mu-\lambda) T_N - \big( \log X_N(0) + \log (\mu-\lambda) - \log \mu \big) \to W,
\end{equation}
in distribution, as $N \to \infty$. 

We observe that the asymptotic formula for the distribution of $T_N$ in~(\ref{eq.low})
is independent of~$N$, while that in~(\ref{eq.high}) is independent of~$X_N(0)$.  An explanation for the first of these phenomena 
is that, in this regime, the quadratic terms in the drift are smaller than the linear ones, and the logistic process behaves essentially identically to a linear birth-and-death chain
with birth rate $\lambda$ and death rate $\mu$. In Section~\ref{sec:final}, we show that the logistic correction to the birth rate
(i.e., the term $-\lambda X(t)^2/N$), does not affect the asymptotics of the remaining time to extinction.

If $\mu-\lambda \to 0$, then, for large enough $X_N(0)$ (such that $X_N(0) \gg (\mu-\lambda)N$), we are in the regime covered by~(\ref{eq.high}),
where the asymptotic distribution of the time to extinction does not depend on the starting state.
To explain this, we give an informal description of the typical course of the epidemic in the case where $\mu-\lambda \to 0$, and we start in some
large state, say with $X_N(0) = \lceil \alpha N \rceil$ and $0 < \alpha \le 1$.

For such a regime, in the initial phase of the epidemic, the number $X_N(t)$ of infectives very rapidly
drops -- in time $o(1/(\mu-\lambda))$ -- until it reaches states of the same order as $(\mu-\lambda)N$.
The majority of the duration of the epidemic -- asymptotically $\log (N(\mu-\lambda)^2))/(\mu-\lambda)$ --
is spent in an intermediate phase, getting from there to states of the same order as $(\mu-\lambda)^{-1}$; the time taken to cross this gap is very
well concentrated around the value derived from the approximating differential equation.
Most of the variability of the time to extinction comes from the final phase of the epidemic, starting when $X_N(t)$ is about the order of
$(\mu-\lambda)^{-1}$; for this final phase, the differential equation is no longer an adequate guide to the behaviour of the stochastic process, and
instead $X_N(t)$ is well approximated by a linear birth-and-death chain.  The expected time for the final phase, starting from a state of order
$(\mu-\lambda)^{-1}$, is on the order of $1/(\mu-\lambda)$, and the standard deviation is of the same order.  (The choice of where to draw the
line between the intermediate and the final phase is somewhat arbitrary: our results essentially show that the approximation by a linear
birth-and-death chain is good starting from any state below the order of $(\mu-\lambda)N$.)

Note that the description above relies on having $(\mu-\lambda)N \gg (\mu-\lambda)^{-1}$. If $\mu-\lambda =o(N^{-1/2})$, then the situation is
completely different: the time to extinction is essentially distributed as in the case $\mu=\lambda$.

The assumption in Theorem~\ref{thm.main} that $X_N(0)(\mu-\lambda) \to \infty$ is necessary for the conclusion to hold; otherwise the variability in the
extinction time is not as large as is given by the Gumbel distribution.
We give more details for the case where this assumption is not satisfied at the end of Section~\ref{sec:final}.

Sections~\ref{sec:final}-\ref{sec.proof} are devoted to the proof of Theorem~\ref{thm.main}.
We track the epidemic process through three phases, roughly corresponding to the three phases mentioned in the informal
description above.  For some regimes, not all the phases are necessary, and we tackle them in reverse order, starting with the final phase of the epidemic. Our intermediate results are stated in terms of a function $\omega(N)$, which tends to infinity suitably slowly; for convenience, we
specify throughout that
\begin{equation} \label{eq:omega}
\omega(N) = (\mu (N)-\lambda (N))^{1/4}N^{1/8}.
\end{equation}

We treat the final phase in Section~\ref{sec:final}.  Here, we start from a state below $N^{1/2} \omega(N)$, and show that, from this point on,
$X_N(t)$ is well approximated by a linear birth-and-death chain with the same parameters. Since the distribution of the extinction time
for a linear birth-and-death chain is known explicitly, this enables us to analyse very precisely the behaviour of the logistic chain. This phase covers the stage of the epidemic giving rise to the randomness in the time to extinction.  An alternative
way to view the final phase of the epidemic is to approximate it by a branching process, where each initially infected individual sparks a brief small epidemic
within the population, and these various small epidemics do not interact significantly.  The time to extinction is then the maximum of the durations of these
small epidemics, and this explains the appearance in our formulae of the standard Gumbel distribution, which typically arises as the maximum of a number of
independent samples from a given distribution.
Note again that the exact break point between the final and intermediate phases is somewhat arbitrary.  We do need to start the final phase in a state well
below $N(\mu - \lambda)$, so that the logistic effects can be ignored, and it is helpful to us to start slightly smaller yet;
on the other hand we do need the initial state larger than $(\mu - \lambda)^{-1}$ for the formula involving the Gumbel distribution to apply.

The intermediate phase is covered in Section~\ref{sec:intermediate}.  Here,
effectively, we prove Theorem~\ref{thm.main} under the additional assumption that $X_N(0) \le (\mu-\lambda)N \omega(N)$.
We show that the scaled process $(X_N(t)/N)$ stays close to the solution of the differential equation~(\ref{eq.diff-eq}) for a (deterministic) period of time
until there are about $N^{1/2} \omega(N)$ infective individuals (from which point the analysis for the final phase can be invoked).

In Section~\ref{sec:initial}, we provide a fairly crude upper bound on the duration of the initial phase of the epidemic, starting from any state and
reaching a state of order about $N(\mu-\lambda)\omega(N)$.  The length of this phase is negligible compared to the overall duration of the epidemic, or even the fluctuations in the overall duration, so greater precision is not necessary.

In Section~\ref{sec.proof}, we combine our results to prove Theorem~\ref{thm.main}.

Our results have some bearing on the critical regime, where $|\mu-\lambda| = O(N^{-1/2})$.  In particular, the methods of Section~\ref{sec:initial}
can be used to show that the expected time for the epidemic starting from an arbitrary state to reach a state of size about $N^{1/2}$ is
of order at most $N^{1/2}$.  Dolgoarshinnykh and Lalley~(2006) show that, in this regime, the scaled logistic process starting from a
state of order $N^{1/2}$ converges in law to an ``attenuated'' Feller diffusion.  One consequence is that the time to
extinction from states of size  about $N^{1/2}$ is of order $N^{1/2}$ (and is not well-concentrated).
We discuss the critical regime briefly in the short Section~\ref{sec.critical}, but make no attempt to provide precise results.

In Section~\ref{sec.total}, we consider the total number $C_N$ of new cases (i.e., infection events) over the duration of the epidemic.
Theorem~\ref{thm.total} provides a precise estimate, valid throughout the subcritical regime, of the expectation of $C_N$ of new cases,
and states that $C_N$ is well-concentrated around its mean, via an estimate of the variance of $C_N$.
One consequence of this result is that, if $X_N(0) \gg (\mu-\lambda)N$, then most of the new cases occur during the short first
phase of the epidemic, i.e., before $X_N(t)$ has dropped to around $(\mu-\lambda)N$.

The total number of new cases in the SIS logistic epidemic is studied in detail by Kessler~(2008), for the full range of parameter values.
In the subcritical regime, Kessler~(2008) gives an asymptotic formula agreeing with ours for the expectation of $C_N$ when $X_N(0)$ is
of order~$N$, and discusses other cases, including ones where $\mu-\lambda = \delta N^{-1/2}$ and $\delta$ is large.  He also estimates the
asymptotic distribution of $C_N$ in subcritical, critical and supercritical regimes, but only in the case
$X_N(0) = 1$.  Our results show that, provided $X_N(0)(\mu-\lambda)$ tends to infinity, $C_N$ is well-concentrated around its expectation.

In Section~\ref{sec.finite}, we present numerical methods to treat fixed values of~$N$.
All our theoretical results concern limiting behaviour as the population size $N$ tends to infinity, and in many places it
is important that terms such as
$\log N$ (or indeed functions potentially growing more slowly) are much larger than constants.  It is not apparent that our results have
any bearing on ``human-size'' populations: we address this issue by performing numerical calculations for a range of values of $N$ and appropriate
values of $\lambda$ and $\mu$.  As we explain in Section~\ref{sec.finite}, it is more efficient to estimate the distribution of (for instance) the
extinction time by numerical integration, as opposed to using Monte Carlo methods.  We see good agreement between asymptotic results and simulation
for temporal behaviour even for $N =10$ and $N=100$, when typically one does not see this until $N=1000$ for epidemic models (see, e.g., Demiris
and O'Neill 2006).

We conclude, in Section~\ref{sec.mean-extinction}, by expanding on our observations about the behaviour of a barely subcritical epidemic.  We discuss
in particular those features that we expect to carry over to more complex models, or to real-world epidemics, especially
the resemblance to a random walk for a period before extinction, and the weakening of the cut-off phenomenon as we approach criticality.
In Appendix~A, we present some data from real epidemics, and make some very tentative connections between our hypotheses and the observations.




\section{Final phase: approximation by linear birth-and-death chains} \label{sec:final}

Suppose that $\mu=\mu(N)$ and $\lambda=\lambda(N)$ are bounded away from both~0 and infinity, and that $X_N(0)$ is non-random with
$X_N(0) (\mu - \lambda) \to \infty$ and $X_N(0) \le \omega(N) N^{1/2}$, where $\omega (N) = (\mu - \lambda)^{1/4} N^{1/8}$, as in (\ref{eq:omega}).
We will show that, with such an initial state,
$X_N(t)$ is well approximated until extinction by a pair of linear birth-and-death chains. The assumption that $X_N(0)(\mu-\lambda) \to \infty$
ensures that the randomness in the extinction time of the approximating linear birth-and-death chains has a Gumbel distribution.

We will prove the following lemma.

\begin{lemma} \label{lem.late}
Suppose that $(\mu-\lambda)N^{1/2} \to \infty$.  Set $\omega (N) = \big( (\mu-\lambda)N^{1/2} \big)^{1/4}$, and
assume that $X_N(0)(\mu-\lambda) \to \infty$ and $X_N(0) \le 2N^{1/2} \omega(N)$.
Then
$$
(\mu - \lambda) T_N - \big( \log X_N(0) + \log (\mu-\lambda) - \log \mu \big) \to W,
$$
in distribution, as $N \to \infty$, where $W$ has the standard Gumbel distribution.
\end{lemma}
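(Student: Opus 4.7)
The strategy is to sandwich $(X_t^N)$ between two linear birth-and-death chains with parameters close to $(\lambda,\mu)$, both of whose extinction times satisfy~\eqref{eq.lbdc2}.  For the upper bound, I would couple $(X_t^N)$ with a linear birth-and-death chain $(Y_t^N)$ with parameters $(\lambda,\mu)$ started at $x^*$, by the standard monotone coupling in which the two chains share their common rates $\lambda x(1-x/N)$ (births) and $\mu x$ (deaths) in every common state $x$, while $Y^N$ alone experiences ``logistic-correction'' births at rate $\lambda x^2/N$.  This yields $X_t^N \le Y_t^N$ for all $t$, and hence $T_e^{X^N} \le T_e^{Y^N}$.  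For the lower bound, take $y^* = 2N^{1/2}\omega(N)$ and $\lambda' = \lambda(1-y^*/N)$, and let $E$ be the event that $X_s^N < y^*$ for all $s \le T_e^{X^N}$.  Since the SIS chain has upward step probability at most $\lambda/(\lambda+\mu) < 1/2$ in every state, Lemma~\ref{lem.escape} gives
\[
\Pr(E^c) \le \exp\Bigl(-\tfrac{\mu-\lambda}{\mu}(y^*-x^*)\Bigr) \le \exp\Bigl(-\tfrac{(\mu-\lambda)N^{1/2}\omega(N)}{\mu}\Bigr) \to 0.
\]
On $E$, $\lambda x(1-x/N) \ge \lambda' x$ for every $x$ visited by $X^N$, so I would couple $(X_t^N)$ with a linear birth-and-death chain $(\wY_t^N)$ with parameters $(\lambda',\mu)$ started at $x^*$ such that $X_t^N \ge \wY_t^N$ throughout $[0,T_e^{X^N}]$.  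Thus $T_e^{\wY^N} \le T_e^{X^N}$ on~$E$.

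Both sandwiching chains satisfy~\eqref{eq.lbdc2}: applied to $(Y^N)$ it gives directly the Gumbel limit claimed in Lemma~\ref{lem.late}, while applied to $(\wY^N)$ it gives
\[
(\mu-\lambda')T_e^{\wY^N} - \bigl(\log x^* + \log(\mu-\lambda') - \log\mu\bigr) \to W,
\]
valid because $x^*(\mu-\lambda')\sim x^*(\mu-\lambda)\to \infty$, since $\Delta := \lambda y^*/N = O(\omega(N)/N^{1/2}) = o(\mu-\lambda)$.  The crux is to replace $\mu-\lambda'$ by $\mu-\lambda$ in this centering with a vanishing error.  Writing $\delta=\mu-\lambda$, $\delta'=\delta+\Delta$, $c=\log x^* + \log\delta - \log\mu$ and $c'=\log x^* + \log\delta' - \log\mu$, the algebraic identity
\[
\delta\, T_e^{\wY^N} - c = \frac{\delta}{\delta'}\bigl(\delta' T_e^{\wY^N} - c'\bigr) + \log(\delta'/\delta) - \frac{\Delta}{\delta'}\,c'
\]
reduces the problem, via Slutsky's theorem, to showing that $\log(\delta'/\delta)\to 0$ and $(\Delta/\delta')c'\to 0$.

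The main obstacle, and the reason for the specific choice $\omega(N) = ((\mu-\lambda)N^{1/2})^{1/4}$, lies in controlling the size of~$c'$.  A naive bound $|c'| = O(\log N)$ would force the stronger hypothesis $(\mu-\lambda)N^{1/2}\gg (\log N)^{4/3}$, which is not assumed.  The key quantitative observation is that $x^* \le N^{1/2}\omega(N)$ together with $\omega(N)^4 = (\mu-\lambda)N^{1/2}$ yields $x^*(\mu-\lambda) \le ((\mu-\lambda)N^{1/2})^{5/4}$, whence
\[
|c'| \le \log(x^*\delta') + |\log\mu| \le \tfrac{5}{4}\log((\mu-\lambda)N^{1/2}) + O(1).
\]
Combining this with $\Delta/\delta' = O\bigl(((\mu-\lambda)N^{1/2})^{-3/4}\bigr)$ gives $(\Delta/\delta')|c'| = O\bigl(((\mu-\lambda)N^{1/2})^{-3/4}\log((\mu-\lambda)N^{1/2})\bigr) \to 0$ as soon as $(\mu-\lambda)N^{1/2}\to\infty$, and of course $\log(\delta'/\delta)\to 0$.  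The sandwich $T_e^{\wY^N} \le T_e^{X^N} \le T_e^{Y^N}$ (valid with probability $1-o(1)$) together with matching Gumbel limits on both sides then delivers Lemma~\ref{lem.late}.
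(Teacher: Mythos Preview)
Your proof is correct and follows essentially the same sandwich strategy as the paper: bound $T_e^{X^N}$ between the extinction times of two linear birth--and--death chains with parameters $(\lambda,\mu)$ and $(\lambda',\mu)$, and show the $(\lambda',\mu)$ chain obeys the same Gumbel limit after recentering. The only cosmetic differences are that the paper runs a single three-chain coupling with threshold $2x^*$ (applying Lemma~\ref{lem.escape} to the upper chain $Y^N$) and handles the recentering by reparametrising $t_w(\mu,\lambda,x^*)=t_v(\mu,\lambda',x^*)$ and showing $v\to w$, whereas you use separate upper/lower couplings with threshold $2N^{1/2}\omega(N)$ and recenter via your algebraic identity plus Slutsky; the governing estimate $\dfrac{\lambda-\lambda'}{\mu-\lambda}\cdot\log\bigl(x^*(\mu-\lambda)\bigr)\to 0$, which is what forces the particular choice of $\omega(N)$, is the same in both.
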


Note that this result is the same as the special case of Theorem~\ref{thm.main} covered by (\ref{eq.low}),
under the more restrictive hypothesis that $X_N(0) \le 2N^{1/2} \omega(N) = 2N^{5/8}(\mu-\lambda)^{1/4}$ instead of $X_N(0) = o((\mu-\lambda)N)$.
Later results will supply the conclusion of Theorem~\ref{thm.main} with no upper bounds on the starting state.

\medskip

For a birth-and-death chain $(B(t))$ on $\Z_+$ with a unique absorbing state at $0$, let $T^B$ be the extinction time, that is $T^B = \inf \{t: B(t) = 0\}$ is the time when $(B(t))$ gets absorbed at $0$. Thus the extinction time of $(X_N(t))$ is $T_N = T^{X_N}$.

\smallskip

Let $(Y(t))_{t \ge 0}$ be a linear birth-and-death chain with birth rate $\lambda$ and death rate $\mu$, so its transition rates from state $Y \in \Z_+$ are given by
\begin{eqnarray*}
Y &\to& Y+1 \quad \mbox{ at rate } \quad \lambda Y, \\
Y &\to& Y-1 \quad \mbox{ at rate } \quad \mu Y.
\end{eqnarray*}

Assume that $Y(0)$ is non-random.
It is known -- see for instance (2.4.23) in the book of Renshaw~(2011) -- that, for $t \ge 0$ and $\mu \not= \lambda$,
\begin{equation} \label{eq.lbdc}
\Pr (T^Y \le t) = \Pr (Y(t) = 0)
= \left( \frac{\mu  - \mu e^{-(\mu-\lambda)t}}{\mu - \lambda e^{-(\mu-\lambda)t} } \right)^{Y(0)}
\!= \left( 1 - \frac{(\mu-\lambda) e^{-(\mu - \lambda) t}}{\mu - \lambda e^{-(\mu-\lambda)t}} \right)^{Y(0)}\!.
\end{equation}

We will write $(Y_N(t))$ to denote a linear birth-and-death chain with birth rate $\lambda(N)$ and death rate $\mu(N)$.
Suppose that $Y_N(0) = X_N(0)$, where $X_N(0)(\mu-\lambda) \to \infty$ as $N \to \infty$.
For each fixed $w \in \R$, we set
$$
t_w = t_w(\mu,\lambda,X_N(0)) = \frac{\log X_N(0) + \log (\mu-\lambda) - \log \mu + w}{\mu-\lambda},
$$
and note that $t_w > 0$ for sufficiently large~$N$.  Restricting to those $N$ for which $t_w$ is indeed positive, we have
that $e^{-(\mu-\lambda)t_w} = \mu e^{-w} / (\mu-\lambda)X_N(0)$. Hence, from (\ref{eq.lbdc}), the extinction time $T^{Y_N}$ satisfies
\begin{eqnarray*}
\Pr (T^{Y_N} \le t_w) &=& \left( 1 - \frac{\mu e^{-w}/X_N(0)}{\mu - \lambda \mu e^{-w}/(\mu-\lambda)X_N(0)} \right)^{X_N(0)} \\
&=& \left( 1 - \frac{e^{-w}}{X_N(0) - \lambda e^{-w}/(\mu-\lambda)} \right)^{X_N(0)} \to e^{-e^{-w}},
\end{eqnarray*}
as $N \to \infty$, since $X_N(0)(\mu-\lambda) \to \infty$.  This can be written as
\begin{equation} \label{eq.lbdc2}
(\mu - \lambda) T^{Y_N} - \big( \log X_N(0) + \log (\mu-\lambda) - \log \mu \big) \to W,
\end{equation}
in distribution, as $N \to \infty$, where $W$ has the standard Gumbel distribution.

\medskip

The plan of the proof of Lemma~\ref{lem.late} is to sandwich the logistic process $(X_N(t))$ between two linear birth-and-death chains, the upper of which is $(Y_N(t))$. The upper bound on
$X_N(0)$ ensures that $X_N(0)/N(\mu - \lambda) \to 0$, so ``logistic effects'' in the drift become negligible and so the linear birth-and-death chains approximate $(X_N(t))$ well.
Our argument is a little crude, in that
the birth rate of the lower of the two birth-and-death chains is significantly below that of the logistic process for most of the phase, and this is
why we need the stronger hypothesis $X_N(0) \le 2N^{1/2} \omega(N)$, rather than just $X_N(0)/N(\mu - \lambda) \to 0$, and the precise form of $\omega$ matters here.

We will use the following result about birth-and-death chains with a higher rate of deaths than births: we omit the routine proof.

\begin{lemma} \label{lem.escape}
Let $(B(t))$ be a birth-and-death chain on $\Z_+$, with $B(0)$ non-random.
Suppose that, from any state, the probability that the next transition is upwards is at most $p$, and the probability that the
next transition is downwards is at least $q >p$.

For any state $B > B(0)$, the probability that the chain $(B(t))$ reaches $B$ before it reaches~0 is at most
$$
\frac{(q/p)^{B(0)} - 1}{(q/p)^B - 1} \le \left( \frac{p}{q} \right)^{B-B(0)} \le \exp \left( - (1-p/q) (B-B(0))\right).
$$
\end{lemma}



\medskip

\begin{proof}[Proof of Lemma~\ref{lem.late}]
We couple three Markov chains: one is the logistic process $(X_N(t))$, another is the linear birth-and-death chain
$(Y_N(t))$ with the same parameters $(\lambda (N),\mu (N))$ as $(X_N(t))$, and the third is a linear birth-and-death chain $(Z_N(t))$  with
parameters $(\lambda' (N), \mu(N))$ where $\lambda'(N) = \lambda (N) (1-2X_N(0)/N)$.  We let $Y_N(0) = X_N(0) = Z_N(0)$.
Let
$\tau_N$ be the first time that either $X_N(t)=0$ or $X_N(t) = 2X_N(0)$.  The birth rate of $(X_N(t))$ when in state $X$ is $\lambda X (1- X/N)$, which for
$t \le \tau_N$ is sandwiched between the birth rates of the two linear birth-and-death chains in the same state.
For each $N$, we may thus construct a coupling such that
$Z_N(t) \le X_N(t) \le Y_N(t)$ for all $t\le \tau_N$.
The rule is that,
if any two chains are in the same state, then they make jumps together as far as possible; otherwise two chains in different states make jumps
independently according to their given transition rates, and so they a.s.\ do not jump
simultaneously (so they do not cross).
With this coupling, on the event that $X_N(\tau_N) = 0$ (i.e., $(X_N(t))$ reaches~0 before it reaches
the upper boundary $2X_N(0)$), $T^{Z_N} \le T_N \le T^{Y_N}$.

For each fixed $w$ and $N$, we choose $v=v(w,N)$ so that $t_w(\mu,\lambda,X_N(0)) = t_v(\mu,\lambda',X_N(0))$, i.e.,
$$
\frac{\log X_N(0) + \log (\mu-\lambda) - \log \mu + w}{\mu-\lambda} = \frac{\log X_N(0) + \log (\mu-\lambda') - \log \mu + v}{\mu-\lambda'}.
$$
This translates to
\begin{equation} \label{eq.hot}
v(w,N) - w = \frac{\lambda-\lambda'}{\mu-\lambda}\left( \log X_N(0) + \log(\mu-\lambda) -\log \mu + w\right) -
\log \left(1 + \frac{\lambda-\lambda'}{\mu-\lambda} \right).
\end{equation}
We observe that
\begin{equation} \label{eq.hut}
\frac{\lambda - \lambda'}{\mu-\lambda} = \frac{2X_N(0) \lambda}{N(\mu-\lambda)} \le \frac{4\lambda \omega(N)}{N^{1/2}(\mu-\lambda)}
= \frac{4\lambda}{\omega(N)^3}.
\end{equation}
Also we have, for $N$ sufficiently large,
$$
\log (X_N(0)(\mu-\lambda)) \le (X_N(0)(\mu-\lambda))^{1/2} \le \big( 2N^{1/2}\omega(N) N^{-1/2}\omega(N)^4 \big)^{1/2} = 2\omega(N)^{5/2}.
$$
Therefore, for each fixed $w$, we have both
$$
\frac{\lambda-\lambda'}{\mu-\lambda}\left( \log X_N(0) + \log(\mu-\lambda) -\log \mu + w\right)
\le \frac{2\lambda}{\omega(N)^3} \left( 2\omega(N)^{5/2} + O(1) \right) = o(1)
$$
and, by (\ref{eq.hut}),
$$
\log \left(1 + \frac{\lambda-\lambda'}{\mu-\lambda} \right) = o(1),
$$
and so, from (\ref{eq.hot}), $|v(w,N)-w| = o(1)$.
In other words, $v(w,N) \to w$ as $N\to \infty$, for each fixed $w$.

Thus
\begin{eqnarray*}
\lefteqn{|\Pr(T^{Z_N} \le t_w(\mu,\lambda,X_N(0))) - e^{-e^{-w}}|} \\
&=& |\Pr(T^{Z_N} \le t_v(\mu,\lambda',X_N(0))) - e^{-e^{-w}}| \\
&\le& | \Pr (T^{Z_N} \le t_v(\mu,\lambda',X_N(0))) - e^{-e^{-v(w,N)}}| + | e^{-e^{-v(w,N)}} - e^{-e^{-w}}| \to 0
\end{eqnarray*}
as $N \to \infty$.  Here we used (\ref{eq.lbdc2}) applied to $(Z_N(t))$ (note that $X_N(0)(\mu-\lambda') > X_N(0)(\mu-\lambda)$, which tends to infinity).

Also by~(\ref{eq.lbdc2}), as $N \to \infty$, 
$$
\Pr(T^{Y_N} \le t_w(\mu,\lambda,X_N(0))) \to e^{-e^{-w}}.
$$

As $(X_N(t))$ is sandwiched between $(Y_N(t))$ and $(Z_N(t))$ for all times $t$, on the event $A:=\{X_N(\tau_N) = 0\}$,
we see that
\begin{eqnarray*}
\Pr (\{T^{Y_N} \le t \} \cap A) \le \Pr (\{T_N \le t\} \cap A) \le \Pr ( \{T^{Z_N} \le t \} \cap A),
\end{eqnarray*}
and in particular this holds with $t =  t_w(\mu,\lambda,X_N(0))$.
By Lemma~\ref{lem.escape} with $p = \lambda/(\lambda + \mu) = 1-q$ and $B(0) = X_N(0)$, 
$\Pr (\overline{A}) \le e^{-(\mu-\lambda)X_N(0)/\mu} = o(1)$,
since $(\mu-\lambda)X_N(0) \to \infty$. Hence, as $N \to \infty$,
$$
\Pr(T_N \le t_w(\mu,\lambda,X_N(0))) \to e^{-e^{-w}}.
$$
Equivalently, as $N \to \infty$,
$$
(\mu - \lambda) T_N - \big( \log X_N(0) + \log (\mu-\lambda) - \log \mu \big) \to W,
$$
in distribution, where $W$ has the standard Gumbel distribution, as claimed.
\end{proof}

In the case where $(\mu-\lambda)X_N(0)$ does not tend to infinity, we can use a similar argument to show that the distribution of
the extinction time of $(X_N(t))$ is asymptotically the same as that of the linear birth-and-death chain with the same
parameters.  We give a brief sketch of the argument in the case where $(\mu-\lambda)X_N(0) \to 0$.

For a linear birth-and-death chain $(Y_N(t))$ with parameters $(\lambda,\mu)$ and $Y_N(0) = X_N(0) = o(1/(\mu-\lambda))$,
Lemma~\ref{lem.escape} shows that the probability of the event $A$ that $(Y_N(t))$ never reaches $N^{1/2}$ before extinction is $1 - o(1)$.
Accordingly, we consider also a linear birth-and-death chain $(Z_N(t))$ with birth rate equal to $\lambda' = \lambda(1-N^{-1/2})$.  As in the proof of
Lemma~\ref{lem.late}, we may couple our three processes so that $Z_N(t) \le X_N(t) \le Y_N(t)$ for all $t$, on the event $A$.

It can be seen from (\ref{eq.lbdc}) that, if $(\mu-\lambda)X_N(0) \to 0$ and $X_N(0) \to \infty$, then for any $v \in (0,\infty)$,
\begin{equation} \label{eq:hat}
\Pr \big(T^{Y_N} \le v X_N(0) /\mu\big) \to e^{-1/v} \quad \mbox{ as } N \to \infty.
\end{equation}
Note that (\ref{eq:hat}) does not depend on $\mu-\lambda$, provided that $(\mu-\lambda)X_N(0) \to 0$.
Note also that $(\lambda-\lambda')X_N(0) = \lambda N^{-1/2}X_N(0) = o(N^{-1/2}/(\mu-\lambda)) = o(1)$, and so
$(\mu-\lambda')X_N(0) \to 0$ whenever $(\mu-\lambda)X_N(0) \to 0$.  Therefore (\ref{eq:hat}) holds with $T^{Y_N}$ replaced by $T^{Z_N}$,
and hence also $\Pr (T_N \le v X_N(0)/\mu) \to e^{-1/v}$.

We can also consider the case where the epidemic starts with a single infective: if $\mu-\lambda \to 0$ and $X_N(0) = 1$, then, for any
$u \in (0, \infty)$,
$$
\Pr \big(T^{X_N} \le u /\mu\big) \to \frac{u}{1+ u} \quad \mbox{ as } N \to \infty.
$$

\section{Intermediate phase: differential equation approximation} \label{sec:intermediate}

For any $\alpha \in [0,1]$, the differential equation~(\ref{eq.diff-eq}) subject to initial condition $x(0) = \alpha$ has an explicit solution
\begin{equation}
\label{eq.solution}
x(t) = \frac{ \alpha (\mu-\lambda) e^{-(\mu-\lambda)t}}{\mu-\lambda + \alpha \lambda(1- e^{-(\mu-\lambda)t})}, \quad \quad t \ge 0.
\end{equation}
For fixed $0 < \alpha \le 1$, the inverse of the function $x(t)$ is given by
\begin{equation}
\label{eq.ST}
t_{\alpha}(x) = \frac{s(x) - s(\alpha)}{\mu-\lambda}; \quad \mbox{ where } s(x) = \log \left( 1 + \frac{\lambda}{\mu-\lambda} x \right) - \log x,
\end{equation}
for $0 < x \le \alpha$.

We also note for future reference that
$x(t) \le x(0) e^{-(\mu-\lambda)t}$, and therefore, for any $t \ge 0$,
\begin{equation} \label{eq.integralx}
\int_0^t x(s) \, dt \le \frac{x(0)}{\mu-\lambda}.
\end{equation}

As in (\ref{eq:omega}), we set $\omega(N) = (\mu-\lambda)^{1/4} N^{1/8}$, and suppose that $N^{1/2} \omega (N) \le X_N(0) \le (\mu-\lambda) N \omega(N)$. Let $X^* = X^*(N) = N^{1/2} \omega(N)$.
We will show that $X_N(t)/N$ is well approximated by the solution $x(t)$ to the differential equation~(\ref{eq.diff-eq}) with $x(0) = X_N(0)/N$,
at least until the time $t^* = t_{X_N(0)/N}(X^*/N)$ when $x(t^*) = X^*/N$.
It will then follow that $X_N(t^*)$ is close to $X^*$ with probability $1-o(1)$ as $N \to \infty$.
The total time to extinction will then be obtained by adding $t^*$ to the time to extinction from a state very near to $X^*$,
which is covered in Lemma~\ref{lem.late}.


\medskip



To be precise, we will prove the following result.

\begin{lemma} \label{lem.slow}
Suppose $(\mu-\lambda) N^{1/2} \to \infty$ as $N\to \infty$.
Set $\omega(N) = (\mu-\lambda)^{1/4} N^{1/8}$, and $X^*=X^*(N) = N^{1/2} \omega(N)$.
Suppose $X^* \le X_N(0)  \le \omega(N) (\mu-\lambda) N$.
Then
$$
\Pr \left( \left| X_N(t^*) - X^* \right| > \omega(N)^{-1/3} X^* \right) = o(1),
$$
where
$$
t^* = t_{X_N(0)/N}(X^*/N) = \frac{s(X^*/N) - s(X_N(0)/N)}{\mu-\lambda}.
$$

Moreover, we have
\begin{equation} \label{eq.number}
t^* = \frac{1}{\mu-\lambda} \left( \log X_N(0) - \log(X^*) - \log \left(1+ \frac{\lambda}{\mu-\lambda} \frac{X_N(0)}{N} \right) + o(1) \right).
\end{equation}
\end{lemma}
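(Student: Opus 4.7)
The plan is to combine Lemma~\ref{lem.inter} (concentration for the discrete chain at step $\lceil k^*\rceil$) with Lemma~\ref{lem.disc-cont} (total variation control between $X^N_{t_0}$ and $\wX^{N,K}_{\lceil K(\mu+\lambda)Nt_0\rceil}$), choosing the ``fineness'' parameter $K$ large enough that the two bounds can be added. Notice that the quantity $k^*$ appearing in Lemma~\ref{lem.inter} is exactly $K(\mu+\lambda)Nt^*$ for the $t^*$ defined in the present lemma. So, applying Lemma~\ref{lem.disc-cont} with $t_0 = t^*$, we get
\[
d_{TV}\bigl(\mathcal{L}(X^N_{t^*}),\ \mathcal{L}(\wX^{N,K}_{\lceil k^*\rceil})\bigr) \le \frac{4(\mu+\lambda)N t^* + 1}{K}.
\]

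First I would bound $t^*$ crudely to decide how large $K$ must be: from the definition of $s(z)$ and the hypothesis $x_0 \le \omega(N)(\mu-\lambda)N$, one sees $t^* = O(\log N / (\mu-\lambda))$, so $Nt^*$ grows polynomially in $N$. Taking any $K = K(N)$ with $K \ge N$ and $K \gg Nt^*$ (for concreteness, $K = N^3$ suffices) makes the total variation bound above $o(1)$. With this choice of $K$, Lemma~\ref{lem.inter} supplies
\[
\Pr_{x_0}\bigl(\lvert \wX^{N,K}_{\lceil k^*\rceil} - x^* \rvert \ge \omega(N)^{-1/3}x^*\bigr) = o(1),
\]
and the total variation bound then transfers this statement to $X^N_{t^*}$, which proves the first assertion of the lemma.

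For the asymptotic expression~(\ref{eq.number}), I expand $s(x^*/N)$ and $s(x_0/N)$. Since $x^* = N^{1/2}\omega(N)$, the extra hypothesis $(\mu-\lambda)N^{1/2}/\omega(N) \to \infty$ makes
\[
\frac{\lambda}{\mu-\lambda}\cdot\frac{x^*}{N} = \frac{\lambda\omega(N)}{(\mu-\lambda)N^{1/2}} = o(1),
\]
so that $\log\bigl(1 + \tfrac{\lambda x^*}{(\mu-\lambda)N}\bigr) = o(1)$ and therefore $s(x^*/N) = \log N - \log x^* + o(1)$. Leaving $s(x_0/N)$ in its original form as $\log N - \log x_0 + \log\bigl(1+\tfrac{\lambda x_0}{(\mu-\lambda)N}\bigr)$, subtracting, and dividing by $\mu-\lambda$ yields the stated formula.

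The only mildly delicate point is the choice of $K$: we need $K\ge N$ for Lemma~\ref{lem.inter} to apply, and simultaneously $K$ much larger than $Nt^*$ so that Lemma~\ref{lem.disc-cont} gives a vanishing total variation bound. Any polynomial $K = K(N)$ growing faster than $N t^*$ (and hence faster than $N\log N/(\mu-\lambda)$) does the job, so this is not a real obstacle; the substantive work has already been done in the two preceding lemmas.
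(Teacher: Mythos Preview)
Your proposal is correct and follows essentially the same route as the paper: combine Lemma~\ref{lem.inter} and Lemma~\ref{lem.disc-cont} by choosing $K=K(N)$ large enough (with $K\ge N$) that the total variation error vanishes, then read off~(\ref{eq.number}) from the definition of $s$ using $\log\bigl(1+\tfrac{\lambda x^*}{(\mu-\lambda)N}\bigr)=o(1)$. The paper simply stipulates $K$ so that $\frac{4(\mu+\lambda)Nt^*+1}{K}\le 1/N$ rather than bounding $t^*$ explicitly, but your crude estimate $t^*=O(\log N/(\mu-\lambda))$ (hence $Nt^*$ polynomial in $N$, since $(\mu-\lambda)N^{1/2}\to\infty$) justifies the concrete choice $K=N^3$ and leads to the same conclusion.
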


The final assertion in the statement follows immediately from the expression in~(\ref{eq.ST}) for $s(x)$, since, with the assumptions given,
$\log \big(1 + \frac{\lambda}{\mu-\lambda} \frac{X^*}{N} \big) = o(1)$.

\medskip

To prove Lemma~\ref{lem.slow}, we will use standard martingale techniques, taking advantage of the special error-correcting nature of the drift in process $X_N(t)$, thanks to which errors in the approximation do not accumulate much over time.
Let $x(t)$ be as in~(\ref{eq.solution}), with $\alpha = x(0) = X_N(0)/N$.  Now set
$$
T = \inf \left\{ t \ge 0: |N^{-1} X_N(t) - x(t)| > 2 \sqrt{X_N(0) (\omega (N))^{1/4} (\lambda + \mu)/N^2(\mu - \lambda)} \right\}.
$$
Note that it will suffice to show that $\Pr (T \le t^*) = o(1)$. This is because, if $T > t^*$, then
\begin{eqnarray}
|X_N(t^*) - X^*| & \le & N \sup_{t \le t^*} \Big |\frac{X_N(t)}{N} - x(t) \Big |
\le 2 \sqrt \frac{X_N(0) \omega^{1/4} (\lambda + \mu)}{\mu-\lambda} \nonumber\\
& \le & \sqrt{\omega^{5/4} N (\mu + \lambda )} = (\mu + \lambda )^{1/2}N^{1/2} \omega^{5/8} \nonumber \\
& = & o(X^* \omega^{-1/3}), \label{eq-o}
\end{eqnarray}
since we have assumed that $X_N(0) \le \omega (N) (\mu - \lambda)N$ and since $X^* = N^{1/2} \omega (N)$.

We write, as is standard,
$$
x(t) = x(0) + \int_0^t f(x(s)) \, ds,
$$
where $f(x) = \lambda x (1-x) - \mu x = - (\mu-\lambda) x - \lambda x^2$.

Also by standard theory,
\begin{equation}
\label{eq.dynkin}
\frac{X_N(t)}{N} = \frac{X_N(0)}{N} - (\mu - \lambda) \int_0^t \frac{X_N(s)}{N} \, ds - \lambda \int_0^t \Big (\frac{X_N(s)}{N} \Big )^2 \, ds + M_N(t),
\end{equation}
where $(M_N(t))_{t\ge 0}$ is a zero-mean martingale.

Setting $e_N(t) = X_N(t)/N - x(t)$, it follows that
\begin{eqnarray} \label{eq.transform2}
e_N(t) 
&=& - \int_0^t  e_N(s) \Big [(\mu - \lambda) + \lambda \Big (\frac{X_N(s)}{N}+ x(s) \Big ) \Big ] \, ds + M_N(t).
\end{eqnarray}

To bound $e_N(t)$, we use the following simple lemma.  For future applications (e.g., in forthcoming work by Lopes and Luczak on the SIS logistic
competition model), we state it in a slightly more general form than needed here.

\begin{lemma} \label{lem.narrowing}
Fix a time $\tau_0$, and let $m: [0,\tau_0] \to \R$ and $r, v: [0,\tau_0] \to \R^+$ be c\`adl\`ag functions, where $v$ is decreasing, and suppose that
$u: [0,\tau_0] \to \R$ is a c\`adl\`ag function satisfying
$$
u(t) = m(t) - v(t) \int_0^t r(s) u(s) \, ds,
$$
for $0\le t \le \tau_0$.  Then
$$
\sup_{t \le \tau_0} |u(t)| \le 2 \sup_{t\le \tau_0} |m(t)|.
$$
\end{lemma}

\begin{proof}
Let $M = \sup_{t \le \tau_0} |m(t)|$.  Choose any $\tau \in [0,\tau_0]$, and suppose without loss of generality that $u(\tau) \ge 0$.
If $u(t) \ge 0$ for all $t \le \tau$, then we certainly have $u(\tau) \le m(\tau) \le M$.  Otherwise,
let $\sigma = \sup\{ t \le \tau : u(t) < 0\} > 0$, and observe that $\lim_{s \to \sigma-} u(s) \le 0$
and $u(s) \ge 0$ for $\sigma < s \le \tau$, and so $\int_{\sigma}^\tau r(s)u(s) \, ds \ge 0$.
We may therefore write
\begin{eqnarray*}
u(\tau) &=& m(\tau) - v(\tau) \int_0^\sigma r(s) u(s) \, ds - v(\tau) \int_{\sigma}^\tau r(s) u(s) \, ds \\
&=& m(\tau) - \lim_{t \to \sigma-} (m(t) - u(t)) \frac{v(\tau)}{v(t)} - v(\tau) \int_{\sigma}^\tau r(s) u(s) \, ds  \\
&\le & M + M \lim_{t\to\sigma-} \frac{v(\tau)}{v(t)} + \lim_{t\to \sigma-} u(t) \frac{v(\tau)} {v(t)} - v(\tau) \int_{\sigma}^\tau r(s) u(s) \, ds \\
&\le & M + M + 0 + 0 = 2M.
\end{eqnarray*}
Hence $|u(t)| \le 2M$ for all $t \le \tau_0$, as required.
\end{proof}

We apply Lemma~\ref{lem.narrowing} with $\tau_0 = t^*$, $u(t) = e_N(t)$, $m(t) = M_N(t)$, $v(t) = 1$, and
$r(s) = (\mu - \lambda) + \lambda (X_N(s)/N + x(s))$.
The hypotheses of the lemma are satisfied since $\mu > \lambda$, and so we have
\begin{equation} \label{eq.bound}
\sup_{t \le t^*} |e_N(t)| \le 2 \, \sup_{t \le t^*} |M_N(t)|. 
\end{equation}
Therefore,
\begin{equation}
\label{eq-T}
\Pr (T \le t^*) \le \Pr \Big (\sup_{t \le t^*} |M_N(t)| > \sqrt{X_N(0) (\omega (N))^{1/4} (\lambda + \mu)/N^2(\mu - \lambda)} \Big ).
\end{equation}

To bound $|M_N(t)|$, we use a standard exponential martingale argument.
%
Let $q_N^1 (x) = \lambda N x(1-x)$ and $q_N^{-1}(x) = \mu N x$ denote the rates of transition of $N^{-1}X_N(s)$, by $1/N$ and $-1/N$, respectively.
For $\theta \in \R$, we define $V_N^{\theta} (t)$ by
\begin{eqnarray}
V_N^{\theta} (t) & = & \exp \Big (\theta N^{-1} (X_N(t) - X_N(0)) - \int_0^t \sum_j q_N^j (N^{-1} X_N(s)) (e^{\theta N^{-1}j}-1) \, ds   \Big ) \nonumber \\
& = & \exp \Big (\theta M_N(t) - \int_0^t \sum_j q_N^j (N^{-1} X_N(s)) (e^{\theta N^{-1}j}-1 - \theta N^{-1} j) \, ds   \Big ) \label{eq.expo-mart}.
\end{eqnarray}
The process $(V_N^{\theta}(t))$ is a mean~1 martingale.
Using that $e^z - 1 - z = z^2 \int_0^1 e^{rz} (1-r) \, dr \le \frac12 z^2 e^{|z|}$, we
see that
$$
V_N^{\theta} (t) \ge \exp \Big ( \theta M_N(t) - \frac{\theta^2}{2N^2} e^{|\theta |/N}  \int_0^t \sum_j q_N^j (N^{-1} X_N(s))   \, ds  \Big ).
$$
Assume that $|\theta | \le N \log 2$. Let $T_1 = \inf  \{t \ge 0: X_N(t) > 2Nx(t) \}$;
then for $t \le T_1$,
\begin{eqnarray}
V_N^{\theta}(t)
& \ge & \exp \Big  (  \theta M_N(t) - \frac{2\theta^2}{N} (\lambda + \mu) \int_0^t x(s) \, ds    \Big ) \nonumber \\
& \ge & \exp \Big  (  \theta M_N(t) - \frac{2 \theta^2(\lambda + \mu)X_N(0)}{N^2(\mu - \lambda) }\Big ), \label{eq.em-bound}
\end{eqnarray}
by (\ref{eq.integralx}).
%

For $\delta \in \R$, let $T^+(\delta) = \inf \{t \ge 0: M_N(t) > \delta \}$, and let
$T^- (\delta) = \inf \{t \ge 0: M_N(t) < -\delta \}$.
On the event $\{T^+(\delta) \le T_1 \}$,
$$
V_N^\theta (T^+(\delta)) \ge \exp \Big (\theta \delta - \frac{2 \theta^2(\lambda + \mu)X_N(0)}{N^2(\mu - \lambda) } \Big ).
$$
By optional stopping and the Markov inequality,
$$
\Pr (T^+(\delta) \le  T_1  ) \le  \exp \Big (-\theta \delta + \frac{2 \theta^2 X_N(0)(\lambda + \mu)}{N^2(\mu - \lambda )} \Big ).
$$

Choosing $\theta =  \frac14 \delta  N^2(\mu - \lambda )/ X_N(0) (\lambda + \mu )$, we have $|\theta | \le  N \log 2$ for sufficiently large~$N$,
provided $\delta = o(X_N(0)/N(\mu-\lambda))$.
We then obtain
$$
\Pr (T^+(\delta) \le  T_1  ) \le e^{-\delta^2N^2 (\mu - \lambda) /8X_N(0) (\lambda + \mu)},
$$
and, similarly,
$$
\Pr (T^-(\delta) \le  T_1 ) \le e^{-\delta^2N^2 (\mu - \lambda) /8X_N(0) (\lambda + \mu)}.
$$
It follows that 
$$
\Pr ( \sup_{t \le t^* \wedge T_1} |M_N(t)| > \delta ) \le 2 e^{-\delta^2N^2 (\mu - \lambda) /8X_N(0) (\lambda + \mu)}.
$$
Take $\delta = \sqrt{X_N(0) \psi (\lambda + \mu)/N^2(\mu - \lambda)}$, for some $\psi \le \sqrt N$; this choice guarantees that $\delta = o(X_N(0)/N(\mu-\lambda))$,
since it is equivalent to $\psi = o(X_N(0)/(\mu - \lambda))$, and we have assumed that $X_N(0)/ N^{1/2} \to \infty$. We thus obtain
$$
\Pr \Big ( \sup_{t \le t^* \wedge T_1} |M_N (t)| > \sqrt{\frac{X_N(0) \psi (\lambda + \mu)}{N^2(\mu - \lambda)}} \Big ) \le 2 e^{-\psi/8}.
$$
If we choose $\psi = (\omega (n))^{1/4}$, then $\psi \le N^{1/2}$ for $N$ large enough, since, by~(\ref{eq:omega}),
$\omega(N) = (\mu (N)-\lambda (N))^{1/4}N^{1/8} =O(N^{1/8})$.
Let $\delta_0 = \sqrt{\frac{X_N(0) \omega^{1/4} (\lambda + \mu)}{N^2(\mu - \lambda)}}$, and let $T_0 = T^+(\delta_0) \wedge T^-(\delta_0)$.
Then, using~\eqref{eq-T},
\begin{eqnarray*}
\Pr (T \le t^*) \le \Pr (T_0 \le t^*) \le \Pr (T_0 \le t^* \wedge T_1) + \Pr (T_1 \le t^* \wedge T_0) \le 2 e^{-\omega (N)^{1/4}/8},
\end{eqnarray*}
since we showed in~\eqref{eq-o} that $\delta_0 = o(X^*/N) = o(x(t))$ for all $t \le t^*$, and so $\Pr (T_1 \le t^* \wedge T_0) = 0$, provided
$N$ is sufficiently large.
This completes the proof of Lemma~\ref{lem.slow}.

\section{Initial phase: upper bounds} \label{sec:initial}

In this section, we show that, if $X_N (0) >  (\mu-\lambda)N \omega(N)$, then by the time
$t_0 = 1/(\omega(N) \lambda (\mu-\lambda))$, $X_N(t)$
with high probability will have dropped down below $(\mu-\lambda)N \omega(N)$.

To this end, we give a lemma showing that $\E (X_N (t)/N)$ is always bounded above by the
solution $x(t)$ of the differential equation~(\ref{eq.diff-eq}).  This result has earlier been proved by Allen (2008, p94), and in a more general
setting by Simon and Kiss (2013).

\begin{lemma} \label{lem.inequality}
Suppose that $X_N(0) \in \{0,1, \ldots, N\}$ is non-random,
and let $x(t)$ be the solution to~(\ref{eq.diff-eq}) with initial condition
$x(0) = X_N(0)/N$.  Then, for all $t \ge 0$,
$$
\E  X_N (t) \le N x(t).
$$
\end{lemma}

\begin{proof}  (Sketch)
It is easy to calculate that, for all $t \ge 0$,
\begin{eqnarray*}
\ddt{} \big( \E X_N (t) - N x(t) \big)
&=& \big( \E X_N (t) - N x(t) \big) \left\{ - (\mu-\lambda) - \lambda \Big(\frac{\E X_N (t)}{N} + x(t)\Big) \right\} \\
&&\mbox{} - \frac{\lambda}{N} \E \big(X_N (t) - \E X_N (t)\big)^2.
\end{eqnarray*}
Using the integrating factor $\exp\Big( t (\mu - \lambda) + \lambda  \int_{0}^t (N^{-1} \E X_N (s) + x(s))\, ds \Big)$,
and the fact that $\E X_N (0) - N x(0) = 0$, it follows that $Y_N (t) = \E X_N (t) - N x(t)$ satisfies
$$
Y_N(t) = - \frac{\lambda}{N} \int_{s=0}^t \E (X_N (s) - \E X_N (s))^2 e^{ - (t-s) (\mu - \lambda)
- \lambda  \int_s^t (N^{-1} \E X_N (u) + x(u)) \, du}  \, ds,
$$
and so is non-positive for all $t\ge 0$.
\end{proof}

The previous result implies the following lemma.

\begin{lemma} \label{lem.quick}
Let $\omega(N)$ be any function tending to infinity, and set $t_0 = t_0(N) = \frac{1}{\omega(N)^{1/2} \lambda (\mu-\lambda)}$. Then, for any initial state $X_N(0)$,
$$
\Pr \big(X_N (t_0) \ge N (\mu-\lambda) \omega(N) \big) = o(1).
$$
\end{lemma}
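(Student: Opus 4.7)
The plan is to combine Lemma~\ref{lem.inequality} with Markov's inequality, and then to bound the ODE solution $z(t_0)$ by an elementary calculation using the closed form of $z(t)$ recorded in Section~\ref{sec:intermediate}. Specifically, writing $z_0 = x_0/N \le 1$ and letting $z(t)$ solve~(\ref{eq.diff-eq}) with $z(0) = z_0$, Lemma~\ref{lem.inequality} and Markov give
$$
\Pr_{x_0}\bigl(X^N_{t_0} \ge N(\mu-\lambda)\omega(N)\bigr) \le \frac{\E_{x_0} X^N_{t_0}}{N(\mu-\lambda)\omega(N)} \le \frac{z(t_0)}{(\mu-\lambda)\omega(N)}.
$$

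The next step is to obtain a bound on $z(t_0)$ that is \emph{uniform} in $z_0 \in (0,1]$, so that no assumption on $x_0$ is needed. From the explicit solution
$$
z(t) = \frac{z_0(\mu-\lambda)e^{-(\mu-\lambda)t}}{\mu-\lambda + z_0\lambda\bigl(1-e^{-(\mu-\lambda)t}\bigr)},
$$
one simply drops the nonnegative term $\mu-\lambda$ from the denominator to obtain
$$
z(t) \le \frac{\mu-\lambda}{\lambda\bigl(e^{(\mu-\lambda)t}-1\bigr)},
$$
independent of $z_0$.

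It remains to plug in $t_0$. Since $\lambda$ is bounded away from $0$ and $\omega(N)\to\infty$, we have $(\mu-\lambda)t_0 = 1/(\omega(N)^{1/2}\lambda) \to 0$, so a Taylor expansion gives $e^{(\mu-\lambda)t_0}-1 = (\mu-\lambda)t_0 (1+o(1)) = (1+o(1))/(\omega(N)^{1/2}\lambda)$. Hence
$$
z(t_0) \le (\mu-\lambda)\,\omega(N)^{1/2}(1+o(1)),
$$
and dividing by $(\mu-\lambda)\omega(N)$ yields the desired bound of $o(1)$. The parenthetical remark in the statement is accounted for separately: if $(\mu-\lambda)\omega(N) \ge 1$ for large $N$, then $N(\mu-\lambda)\omega(N) \ge N$, and since $X^N_t \le N$ always, the event in question has probability at most $\Pr_{x_0}(X^N_{t_0}=N)$, which is easily $o(1)$ (e.g.\ it requires no recovery events in a time interval on which $\mu N t_0 \to\infty$). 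There is no real obstacle here: the work was done in Lemma~\ref{lem.inequality}, and the remaining argument is a one-line application of Markov together with a crude bound on the explicit ODE solution.
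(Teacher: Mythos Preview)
Your proof is correct and follows essentially the same route as the paper: bound $\E_{x_0} X^N_{t_0}$ by $Nz(t_0)$ via Lemma~\ref{lem.inequality}, drop the $(\mu-\lambda)$ term from the denominator of the explicit solution to get a $z_0$-free upper bound, and finish with Markov's inequality. The only cosmetic difference is that the paper uses the clean inequality $ue^{-u}\le 1-e^{-u}$ (valid for all $u>0$) in place of your Taylor expansion, which makes the separate treatment of the case $(\mu-\lambda)\omega(N)\ge 1$ unnecessary.
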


\begin{proof}
We note that, for any value of $x(0)$, and any $t \ge 0$,
$$
x(t) = \frac{ x(0) (\mu-\lambda) e^{- (\mu-\lambda)t}}{(\mu-\lambda) + x(0) \lambda (1-e^{-(\mu-\lambda)t})}
\le \frac{(\mu-\lambda) e^{-(\mu-\lambda)t}}{\lambda (1- e^{-(\mu-\lambda)t})}
\le \frac{1}{\lambda t}.
$$
Here we used the inequality $e^{-u} \le (1-e^{-u})/u$, valid for all $u > 0$.

Therefore we have $x(t_0) \le (\mu-\lambda) \omega(N)^{1/2}$, for any value of $x(0)$.
Hence, by Lemma~\ref{lem.inequality}, we have $\E X_N (t_0) \le (\mu-\lambda) N \omega(N)^{1/2}$, for any initial state $X_N(0)$,
and it follows that
$$
\Pr \Big(X_N (t_0) \ge (\mu-\lambda)N \omega(N) \Big) \le \frac{1}{\omega(N)^{1/2}} = o(1).
$$
\end{proof}

\section{Proof of Theorem~\ref{thm.main}} \label{sec.proof}

In this section, we assemble the lemmas from the preceding three sections into a proof of Theorem~\ref{thm.main}.

Given two copies $(B(t))$ and $(\tilde{B}(t))$ of a
continuous-time birth-and-death chain, with $B(0) \le \tilde{B}(0)$,
we can couple them in a monotone way.
If $B(t) = \tilde{B}(t)$, then they make the next jump (and all subsequent jumps) together.
For as long as $B(t) \not = \tilde{B}(t)$, $(B(t))$ and $(\tilde{B}(t))$ evolve independently, so that a.s.\ they do not jump simultaneously. This ensures that a.s. the two copies of the chain
never cross, so that $B(t) \le \tilde{B}(t)$ a.s. for all $t$.

\begin{proof}
Recall from (\ref{eq:omega}) that $\omega(N) = \big( N^{1/2} (\mu-\lambda) \big)^{1/4}$.
We distinguish three ranges for the starting state $X_N(0)$, assuming always that $(\mu-\lambda)X_N(0) \to \infty$:
\begin{enumerate}
\item[(a)] $X_N(0) \le 2N^{1/2}\omega(N)$,
\item[(b)] $2N^{1/2} \omega(N) < X_N(0) \le (\mu-\lambda)N \omega(N)$,
\item[(c)] $X_N(0) > (\mu - \lambda)N \omega(N)$.
\end{enumerate}
It could be that $X_N(0)$ falls into different ranges for different values of $N$: we partition the set of natural numbers into three sets depending
on which of (a), (b), (c) holds.  It suffices to prove the result separately for whichever subsequence(s) are infinite, and so we may treat each of
the three ranges in turn, working (tacitly) with an infinite sequence of values of $N$ for which the inequalities defining the range hold.

\medskip
\noindent
(a) Suppose $(\mu-\lambda) X_N(0) \to \infty$ and $X_N(0) \le 2N^{1/2} \omega(N)$.  By
Lemma~\ref{lem.late}, 
$$
(\mu - \lambda) T_N - \big( \log X_N(0) + \log (\mu-\lambda) - \log \mu \big) \to W,
$$
in distribution, as $N \to \infty$, where $W$ has the standard Gumbel distribution.  This is~(\ref{eq.low}), which is equivalent to~(\ref{eq.general})
in this range.

\medskip

\noindent
(b) Let $X^*= X^*(N) = N^{1/2} \omega(N)$.
Suppose that $2N^{1/2} \omega (N) < X_N(0) \le (\mu-\lambda)N \omega(N)$.  We run $(X_N(t))$ for a time
$t^* = t_{X_N(0)/N}(X^*/N)$, as in the statement of Lemma~\ref{lem.slow}.
Let $E$ be the event that
$X^*(1 - \omega(N)^{-1/3}) \le X_N(t^*) \le X^*(1 + \omega(N)^{-1/3})$; by Lemma~\ref{lem.slow}, $\Pr(\overline E) = o(1)$.

Let $(Y_N(t))$ be a copy of the logistic process with $Y_N(0)= X^*(1 - \omega(N)^{-1/3})$, and $(Z_N(t))$ be a copy
with $Z_N(0) = X^*(1 + \omega(N)^{-1/3})$.  On the event $E$, we couple these with $(X_N(t))$ from time $t^*$ onwards in a monotone way, so that $Y_N(t) \le X_N(t^*+t) \le Z_N(t)$ for all $t \ge 0$.  Then, on the event~$E$,
$T^{Y_N} + t^* \le T_N \le T^{Z_N} + t^*$.  By Lemma~\ref{lem.late}, as $N \to \infty$, in distribution,
$$
(\mu - \lambda) T^{Y_N} - \big( \log X_N(0) + \log (1- \omega(N)^{-1/3}) + \log (\mu-\lambda) - \log \mu \big) \to W,
$$
where $W$ is a standard Gumbel random variable.  Since $\log(1-\omega(N)^{-1/3}) = o(1)$, we then have from the asymptotic formula (\ref{eq.number}) for $t^*$, as $N \to \infty$,
$$
(\mu - \lambda) (T^{Y_N} + t^*) - \Big( \log X_N(0) + \log (\mu-\lambda) - \log \Big( 1 + \frac{\lambda X_N(0)}{(\mu-\lambda)N}\Big) - \log \mu \Big) \to W,
$$
in distribution, and the same holds when $T^{Y_N}$ is replaced by $T^{Z_N}$.  Hence, as $N \to \infty$,
$$
(\mu - \lambda) T_N - \Big( \log X_N(0) + \log (\mu-\lambda) - \log \Big( 1 + \frac{\lambda X_N(0)}{(\mu-\lambda)N}\Big) - \log \mu \Big) \to W,
$$
in distribution.  This is (\ref{eq.equiv}), which we have seen is equivalent to~(\ref{eq.general}).

\medskip

\noindent
(c) Suppose $X_N(0) > (\mu - \lambda)N \omega(N)$. Let $\kappa_N$ be the hitting time of $\lfloor (\mu - \lambda)N \omega (N) \rfloor$. Let $t_0 = 1/\omega(N)^{1/2}\lambda (\mu-\lambda)$,
as in Lemma~\ref{lem.quick}; by Lemma~\ref{lem.quick}, $\kappa_N \le t_0$ with probability $1-o(1)$. Then $T_N$ is the sum of $\kappa_N$ and the time to extinction from state $\lfloor (\mu - \lambda)N \omega (N) \rfloor$. So $T_N$ is bounded below by
$$\frac{1}{\mu - \lambda} \big( \log \big((\mu-\lambda)N \omega(N)\big) + \log (\mu-\lambda) - \log \big( 1 + \lambda \omega(N)\big) - \log \mu  + W_N \big ),$$
and, with probability $1-o(1)$, bounded above by
$$\frac{1}{\mu - \lambda} \big(t_0 (\mu-\lambda) + \log \big((\mu-\lambda)N \omega(N)\big) + \log (\mu-\lambda) - \log \big( 1 + \lambda \omega(N)\big) - \log \mu  + W_N \big ),$$
where $W_N$ converges in distribution to a standard Gumbel random variable $W$.
%
%
Since $(\mu-\lambda) t_0 = o(1)$ and $\omega (N) \to \infty$, it follows that, as $N \to \infty$,
$$
(\mu - \lambda) T_N - \big( \log N + 2 \log (\mu-\lambda) - \log \lambda - \log \mu \big) \to W,
$$
in distribution.  This is~(\ref{eq.high}), which is equivalent to~(\ref{eq.general}) in this range.

\smallskip

This completes the proof.
\end{proof}

\section{The critical regime}  \label{sec.critical}

Our methods can also be applied in the critical regime, where $|\mu-\lambda| = O(N^{-1/2})$.
In this case, there exist constants $\delta, c > 0$ (depending on $\limsup_{N\to \infty} (\lambda-\mu)N^{1/2}$)), such that, regardless of the value taken by $X_N(0)$,
$\Pr (T_N \le c N^{1/2}) > \delta$.  One way to prove this is as follows: (a)~apply Lemma~\ref{lem.inequality} for
$t = N^{1/2}$, and so $x(t) \le 1/(\lambda t)= O(N^{-1/2})$, to show that, for some constant $c_1$, with positive probability, $X_N(t) \le c_1 N^{1/2}$, uniformly
in $X_N(0)$; (b)~compare $(X_N(t))$ with a linear birth-and-death chain with the same parameters, with initial state $c_1 N^{1/2}$,
and show that, for some constant $c_2$, with positive probability, $(X_{N}(t))$ reaches~0 in a further time $c_2 N^{1/2}$.  Thus there is a
positive probability of extinction by time $(c_1+c_2)N^{1/2}$, whatever the initial state.  It now follows, by
repeated trials, that $\Pr (T_N > \omega(N) N^{1/2}) \to 0$ whenever $\omega(N) \to \infty$.  Throughout the critical regime, a lower bound
on the extinction time of the form $\Pr (T_N \le \eps(N) N^{1/2} ) \to 0$ whenever $\eps(N)\to 0$ can again be obtained by
comparing with a suitable linear birth-and-death chain.  Much more precise results concerning the process in the critical regime with initial state
of order $N^{1/2}$ are given by Dolgoarshinnykh and Lalley~(2006).
In this regime, both the expected extinction time and the fluctuations are of order $N^{1/2}$, so that we do not have cut-off.  This is in line with
our results for the barely subcritical regime showing that cut-off becomes less pronounced as we approach the critical regime from below.

\section{Total number of cases}
\label{sec.total}

We now turn our attention to the total number $C_N$ of new cases (infection events) from the start of the epidemic until its extinction.

We will prove that, provided $X_N(0)(\mu-\lambda) \to \infty$, the total number
$C_N$ of cases is concentrated around its expectation, which is close to
$$
N \frac{\mu}{\lambda} \log \left( 1 + \frac{\lambda X_N(0)}{N (\mu - \lambda)}  \right) - X_N(0).
$$
In the case where $X_N(0)/N(\mu-\lambda) \to 0$, our results imply that the expectation of $C_N$ is close to $\frac{\lambda X_N(0)}{\mu-\lambda}$
and the variance is of the order at most $X_N(0)(\mu-\lambda)^{-3}$.  (This can be interpreted as saying that the epidemic behaves as
$X_N(0)$ independent outbreaks from a single initial infective.)  If $X_N(0)/N(\mu - \lambda) \to \infty$, then
our results show that $C_N$ has expectation approximately $N \log\Big( X_N(0) / N (\mu-\lambda)\Big)$,
and variance of order at most $N(\mu-\lambda)^{-2}$.

\begin{theorem}
\label{thm.total}
Suppose that $\mu = \mu(N)$ and $\lambda= \lambda(N)$ are bounded away from both~0 and infinity.
Suppose also that $(\mu - \lambda)N^{1/2} \to \infty$ as $N \to \infty$, and that $X_N(0)$ is non-random.
Let $v_N(x) = \min \Big ( \frac{x^{1/2}}{(\mu-\lambda)^{3/2}} , \frac{N^{1/2}}{\mu-\lambda} \Big)$. Then, for any $\eps > 0$, there exists $K(\eps)$ such that, for $N$ sufficiently large,
\begin{eqnarray*}
\Pr \Big (\Big |C_N  - \frac{\mu}{\lambda} N \log \Big ( 1 + \frac{\lambda X_N(0)}{N (\mu - \lambda)}  \Big ) + X_N(0)\Big | \ge K(\eps) v_N(X_N(0))\Big ) \le \eps.
\end{eqnarray*}
\end{theorem}
\begin{proof}
For $X=0, \dots, N$, let $\ell_X = \lambda(1-X/N)$, so that the birth rate when $X_N(t) = X$ is equal to $X \ell_X$.
Then $C_N$ has the same distribution as the number of births before extinction in a corresponding discrete-time
birth-and-death chain $(\hat{X}_N(t))$, where the probability of a birth in state~$X$ is $\ell_X/(\mu+\ell_X)$, and the probability of a death is
$\mu/(\mu + \ell_X)$, so we can work with $(\hat{X}_N(t))$ instead.

Given $\hat{X}_N (0) = X$ , the number of births before extinction can be represented as the sum of independent random
variables $C_{N,Y}$, for $Y = X, X-1, \dots, 1$, where $C_{N,Y}$ is the number of new cases starting in state~$Y$ until
hitting state~$Y-1$.
We will show that, for all $Y = 1, \ldots, N$,
\begin{eqnarray*}
\left( 1 + \frac{\mu \lambda }{N (\mu-\ell_Y)^2}  \right)^{-1} \frac{\ell_Y}{\mu - \ell_Y} \le \E C_{N,Y} \le \frac{\ell_Y}{\mu - \ell_Y}.
\end{eqnarray*}
Conditioning on the first step in a standard way, we see that
$\mu \E C_{N,Y} = \ell_Y (\E C_{N,Y+1} + 1)$, for $1 \le Y \le N-1$.
We now proceed by downward induction.
Since $\ell_N = 0$, both our upper and lower bounds on $\E C_{N,N}$ are equal to zero, which is the true value.
Suppose that we have the stated upper bound on $\E C_{N,Y+1}$, so that, using the fact that $\mu \ge \lambda$ implies $\mu - \ell_Z \ge 0$ for all $Z$,
$\E C_{N,Y+1} \le \frac{\ell_{Y+1}}{\mu - \ell_{Y+1}} \le \frac{\ell_Y}{\mu - \ell_Y}$.  Then
$$
\E C_{N,Y} = \frac{\ell_Y}{\mu} (\E C_{N,Y+1} + 1) \le \frac{\ell_Y}{\mu} \left( \frac{\ell_Y}{\mu -\ell_Y} + 1 \right) = \frac{\ell_Y}{\mu-\ell_Y},
$$
which is the required upper bound on $\E C_{N,Y}$.

Suppose now that we have the stated lower bound on $\E C_{N,Y+1}$, so that
$$
\E C_{N,Y+1} \ge \left(1 + \frac{\mu \lambda }{N (\mu-\ell_{Y+1})^2}  \right)^{-1} \frac{\ell_{Y+1}}{\mu - \ell_{Y+1}}.
$$
Then, since $\ell_Y \ge \ell_{Y+1}$,
\begin{eqnarray*}
\E C_{N,Y+1} + 1 &\ge&
\frac{\mu}{\mu-\ell_{Y+1}} - \frac{\ell_{Y+1}}{\mu - \ell_{Y+1}}\left( 1 - \frac{1}{ 1 + \frac{\mu \lambda }{N (\mu-\ell_{Y+1})^2} } \right) \\
&\ge& \frac{\mu}{\mu-\ell_{Y+1}} - \frac{\ell_Y}{\mu - \ell_{Y+1}}\left( 1 - \frac{1}{ 1 + \frac{\mu \lambda }{N (\mu-\ell_Y)^2} } \right) \\
&=& \frac{\mu}{\mu-\ell_{Y+1}} - \frac{\ell_Y}{\mu - \ell_{Y+1}}
\frac{\frac{\mu \lambda }{N (\mu-\ell_Y)^2} }{\left( 1 + \frac{\mu \lambda }{N (\mu-\ell_Y)^2} \right)} \\
&=& \frac{\mu}{\mu-\ell_{Y+1}} \left(\frac{1 + \frac{\mu \lambda }{N (\mu-\ell_Y)^2} - \frac{\lambda \ell_Y}{N(\mu-\ell_Y)^2}}{1 + \frac{\mu \lambda }{N (\mu-\ell_Y)^2}}\right) \\
&=& \frac{\mu}{\mu-\ell_{Y+1}} \left(\frac{1 + \frac{\lambda }{N (\mu-\ell_Y)}}{1 + \frac{\mu \lambda }{N (\mu-\ell_Y)^2}}\right)
= \frac{\mu}{\mu-\ell_Y} \frac{1}{\left(1 + \frac{\mu \lambda }{N (\mu-\ell_Y)^2}\right)}.
\end{eqnarray*}
In the last step, we also used that $\mu - \ell_{Y+1} = \mu - \ell_Y + \lambda/N$,
and so $1 + \frac{\lambda }{N (\mu-\ell_Y)} = \frac{\mu-\ell_{Y+1}}{\mu-\ell_Y}$. It follows, as required for the induction step, that
\begin{eqnarray*}
\E C_{N,Y} &=& \frac{\ell_Y}{\mu} (\E C_{N,Y+1} + 1)
\ge \frac{\ell_Y}{\mu-\ell_Y} \left(1 + \frac{\mu \lambda }{N (\mu-\ell_Y)^2}  \right)^{-1}.
\end{eqnarray*}

\smallskip

It follows from the above bounds that
\begin{eqnarray*}
\E C_N &\le& \sum_{Y=1}^{X_N(0)} \frac{\ell_Y}{\mu - \ell_Y} \le \int_{x=0}^{X_N(0)} \left( \frac{\mu}{\mu - \lambda(1-x/N)} - 1 \right) \, dx \\
&=& \frac{\mu N}{\lambda}\log \left( 1 + \frac{\lambda X_N(0)}{N(\mu-\lambda)} \right) - X_N(0),
\end{eqnarray*}
and, since $N(\mu-\lambda)^2 \to \infty$, also that, for $N$ sufficiently large,
$$
\E C_N \ge \sum_{Y=1}^{X_N(0)} \frac{\ell_Y}{\mu - \ell_Y} - \frac{2\mu \lambda^2}{N} \sum_{Y=1}^{X_N(0)} \frac{1}{(\mu-\ell_Y)^3}.
$$
Noting that
\begin{eqnarray*}
\lefteqn{\int_{x=0}^{X_N(0)} \left( \frac{\mu}{\mu - \lambda(1-x/N)} - 1 \right) \, dx } \\
&\le& \sum_{Y=0}^{X_N(0)-1} \frac{\ell_Y}{\mu - \ell_Y}
= \sum_{Y=1}^{X_N(0)} \frac{\ell_Y}{\mu - \ell_Y} + \frac{\lambda}{\mu-\lambda} - \frac{\ell_{X_N(0)}}{\mu-\ell_{X_N(0)}} \\
&\le& \sum_{Y=1}^{X_N(0)} \frac{\ell_Y}{\mu - \ell_Y} + \frac{\mu(\lambda - \ell_{X_N(0)})}{(\mu-\lambda)(\mu-\ell_{X_N(0)})} \\
& \le & \sum_{Y=1}^{X_N(0)} \frac{\ell_Y}{\mu - \ell_Y} + \frac{\mu\lambda X_N(0)}{N(\mu-\lambda)^2},
\end{eqnarray*}
that
\begin{eqnarray*}
\sum_{k=1}^X \frac{1}{(\mu-\ell_k)^3} &\le&
\min \left \{\frac{X}{(\mu-\lambda)^3}, \int_{x=0}^\infty \frac{1}{(\mu-\lambda + \lambda x / N)^3} \, dx  \right \} \\
&=& \min \left \{ \frac{X}{(\mu-\lambda)^3}, \frac{N}{2\lambda (\mu-\lambda)^2}  \right \},
\end{eqnarray*}
and that $X_N(0) \le N$, we see that, for $N$ large enough,
\begin{eqnarray}
\lefteqn{\left| \E C_N - \left( \frac{\mu N}{\lambda}\log \left( 1 + \frac{\lambda X_N(0)}{N (\mu-\lambda)} \right) - X_N(0) \right) \right| } \nonumber \\
&\le& \frac{\mu \lambda X_N(0)}{N(\mu-\lambda)^2} + \min \left( \frac{2\mu \lambda^2 X_N(0)}{(\mu-\lambda)^3 N} , \frac{\mu \lambda}{(\mu-\lambda)^2}\right) \nonumber \\
&\le& 3\min \left( \frac{\mu^2 \lambda X_N(0)}{(\mu-\lambda)^3 N} , \frac{\mu \lambda}{(\mu-\lambda)^2}\right). \label{tc-mean}
\end{eqnarray}

\smallskip

We now estimate the variance of $C_N$, noting that $\Var C_N = \sum_{Y=1}^{X_N(0)} \Var C_{N,Y}$.

Starting from $Y$ and until the hitting time $\tau_{N,Y}$ of $Y-1$ by $(\hat{X}_N(t))$, we can couple $(\hat{X}_N(t))$ with a discrete chain $(\hat{X}_{N,Y}(t))$ where, in any state, the probability of a birth is $\ell_Y /(\mu+\ell_Y)$ and the probability of a death is $\mu/(\mu + \ell_Y)$, in such a way that $\hat{X}_N (t) \le \hat{X}_{N,Y}(t)$ for $0 \le t \le \tau_{N,Y}$.
Letting $D_{N,Y}$ be the number of births in $(\hat{X}_{N,Y}(t))$ starting from $Y$ until hitting $Y-1$, we thus see that, under the coupling, $C_{N,Y} \le D_{N,Y}$. It follows that
\begin{eqnarray*}
\Var C_{N,Y} &=& \E C_{N,Y}^2 - (\E C_{N,Y})^2 \le \E D_{N,Y}^2 = \Var D_{N,Y} + (\E D_{N,Y})^2.
\end{eqnarray*}
For a given value of $Y$, consider a discrete random walk, starting at $1$, with probability $p = \mu/(\ell_Y + \mu)$ of a down-step and probability
$q = \ell_Y/(\ell_Y + \mu)$ of an up-step. Then $D_{N,Y}$ has the same distribution as $(T_{N,Y}-1)/2$, where $T_{N,Y}$ is the hitting time of the origin for this walk. Standard arguments imply that the generating function $G_{N,Y}$ of $T_{N,Y}$ satisfies the recurrence
$G_{N,Y} (z) = p z + q z (G_{N,Y} (z))^2$, and so $G_{N,Y}(z) = (1 - \sqrt{1-4pqz^2})/2qz$. Differentiating, we obtain $\E D_{N,Y} = \ell_Y /(\mu-\ell_Y)$ and
$\displaystyle \Var D_{N,Y} = \ell_Y \mu (\ell_Y + \mu) / (\mu-\ell_Y)^3$, and hence $\displaystyle \Var C_{N,Y} \le \frac{2\lambda \mu^2}{(\mu-\ell_Y)^3}$. Summing over $Y$,
\begin{eqnarray}
\label{tc-var}
\Var C_N \le 2 \mu^2 \min  \left( \frac{\lambda X_N(0)}{(\mu-\lambda)^3}, \frac{N}{(\mu-\lambda)^2} \right).
\end{eqnarray}

Suppose that $X_N(0)/N(\mu - \lambda) \to \infty$. Then for $N$ large enough, the upper bound in~\eqref{tc-mean} is equal to $\frac{3\mu \lambda}{(\mu-\lambda)^2} \le N^{1/2}/(\mu - \lambda)$, and $\Var C_N \le 2\mu^2 N/(\mu - \lambda)^2$ in this case. If $X_N(0)/N(\mu - \lambda)$ is bounded, then the upper bound in~\eqref{tc-mean} is at most of the order $\displaystyle \frac{X_N(0)}{(\mu-\lambda)^3 N}$, and, for $N$ large enough,
$$\frac{X_N(0)^{1/2}}{(\mu - \lambda)^{1/2} N^{1/2}} \cdot \frac{1}{(\mu - \lambda) N^{1/2}} \cdot \frac{X_N(0)^{1/2}}{(\mu - \lambda)^{3/2}}\le \frac{X_N(0)^{1/2}}{(\mu - \lambda)^{3/2}},$$
while $\Var C_N$ is of the order at most $\displaystyle \frac{\lambda X_N(0)}{(\mu-\lambda)^3}$. In both cases, the theorem follows by Chebyshev's inequality.
\end{proof}

\section{Numerical methods}
  \label{sec.finite}

The stochastic SIS logistic process $(X_N(t))$, as defined by events and
rates~(\ref{eq:eventsrates}), can be analysed through use of its corresponding Kolmogorov forward
equations, as we now explain.  We will not index all terms by $N$ explicitly here for notational
simplicity, but the method of analysis is for a population of size $N$.  We
start by writing $p_X(t) = \Pr(X_N(t)=X)$, and let $\myvec{p}(t)$ be a column
vector whose $X$-th entry is $p_X(t)$ -- our convention is that such a vector
starts at its $0$-th element and has length $N+1$.  (We follow the more applied literature in treating
$\myvec{p}(t)$ as a column vector.)  The Kolmogorov forward equations
then take the form of a linear system of differential equations
\ba
\ddt{p_X} & = - \left(\mu X +\lambda X\left(1-\frac{X}{N}\right)\right) p_X +
\lambda (X-1)\left(1-\frac{X-1}{N}\right)
p_{X-1} \\ & \qquad + \mu (X+1) p_{X+1} \text{ ,} \qquad 0<X<N \text{ ,} \\
\ddt{p_0} & = \mu p_1 \text{ ,} \\
\ddt{p_N} & = -\mu N p_N + \lambda \frac{N-1}{N}p_{N-1} \text{ .}
\label{compkol}
\ea
These can be expressed in the form
\be
\ddt{\myvec{p}} = \mymat{M}\myvec{p} \text{ ,} \label{matkol}
\ee
where $\mymat{M}$ is an $(N+1)\times(N+1)$ matrix.  Quantities of interest
include:
\be
\E X_N(t) = \myvec{X}\cdot\myvec{p}(t) \text{ ,}\quad
F_{T_N}(t) = \Pr(T_N \le t) = p_0(t) \text{ ,}\quad
f_{T_N}(t) = \mu p_1(t) \text{ ,}
\ee
where $\myvec{X}$ is a vector whose $X$-th element is $X$, $F_{T_N}$ is the
distribution function of the extinction time and $f_{T_N}$ is the probability
density function of the extinction time, which has the form above due to the
second equation in~\eqref{compkol}.  To integrate~\eqref{matkol} numerically, we make use of
the implicit Euler scheme, as has been advocated for stochastic epidemic models
by Jenkinson and Goutsias~(2012). We now sketch the arguments and approach
presented in that paper.  First, note that the solution of~\eqref{matkol} is
given by a matrix exponential,
\be
\myvec{p}(t) = \Exp(\mymat{M} t) \myvec{p}(0) \text{ ,} \nonumber
\ee
and therefore over a time interval $[t,t+h]$ we can write
\ba
\myvec{p}(t+h) & = \Exp(\mymat{M} h) \myvec{p}(t) =
(\mymat{I} + \mymat{M} h + O(h^2)) \myvec{p}(t) \text{ ,} \\
\myvec{p}(t) & = \Exp(-\mymat{M} h) \myvec{p}(t+h) =
(\mymat{I} - \mymat{M} h + O(h^2)) \myvec{p}(t+h) \text{ ,}
\label{matrixexpand}
\ea
where $\mymat{I}$ is the $(N+1)\times(N+1)$ identity matrix. The implicit Euler
numerical scheme is based on the second equation in~\eqref{matrixexpand} and
involves solving the matrix equation
\be
(\mymat{I} - \mymat{M} h) \myvec{p}_+ = \myvec{p}
\label{ie}
\ee
to obtain an approximation $\myvec{p}_+$ to $\myvec{p}(t+h)$, in terms of an approximation $\myvec{p}$ to $\myvec{p}(t)$, at each timestep,
for example by using \textsc{Matlab}'s \verb|\| operator.  We see from the first equation of~\eqref{matrixexpand} that
the error introduced at each timestep is $O(h^2)$, and so the global error over the interval $[0,t]$ is $O(th)$ as $h \to 0$.
This means that, in practice, for given choices of $N$, $X_N(0)$, $\lambda$ and $\mu$, we can tune $h$ to
achieve any desired accuracy.  Suppose that $\mathbf{p}$ is a probability vector; we now show that
$\mathbf{p}_+$ generated by~\eqref{ie} is also a probability vector.
First, premultiplying~\eqref{ie} by a row vector of ones, $\mathbf{1}^{\top}$, we obtain
\be
1 = \mathbf{1}^{\top} \myvec{p} = \mathbf{1}^{\top}\mymat{I} \myvec{p}_+
+ h \mathbf{1}^{\top} \mymat{M} \myvec{p}_+ =\mathbf{1}^{\top} \myvec{p}_+\text{ ,}
\ee
which holds because $\mymat{M}$ generates a Markov chain and so its columns must
sum to zero: $\mathbf{1}^{\top} \mymat{M} = \mathbf{0}$.
Secondly, from its definition, the off-diagonal elements of $\mymat{M}$
are non-negative, meaning that the off-diagonal elements of $\mymat{I} -
\mymat{M} h$ are non-positive and so, after checking for non-singularity,
all elements of $(\mymat{I} - \mymat{M} h)^{-1}$ are non-negative.

Often, solution of equations such as~\eqref{matkol} for $\myvec{p}(t)$ is more
numerically efficient (particularly for calculating distributions of quantities
like extinction times) than Monte Carlo methods that use (pseudo-)random number
generation; see Keeling and Ross (2008).  To see why this should be so for our case, note that one
of the quantities we wish to calculate is the probability distribution function
for the extinction time of an epidemic with rates $\lambda$ and $\mu$ of
order~1 with $\mu - \lambda \approx 10^{-3}$, population size $N=10^7$, and
$X_N(0) = N$.  For these parameter values, our asymptotic results from
Theorem~\ref{thm.total} give that we expect to see over $10^8$ events. Using
Monte Carlo methods, we would need to simulate each of these to achieve one
extinction, and would need to simulate many realisations of the entire epidemic
to control the Monte Carlo error. In contrast, use of the forward Euler method
as above requires just one realisation with the step-size $h$ at an appropriate
value relative to required numerical error, and solution of an $N$-dimensional
sparse linear system at each step.

We performed a comparison of the simulation results based on implicit Euler
solution of the Kolmogorov forward equations with our asymptotic results for a range of
values of $N$ from $10^1$ to $10^7$, keeping $\mu=1$ throughout, for two
different scenarios. In the first scenario, we do not scale $\mathcal{R}_0 = \lambda$
with $N$, but instead leave it constant at $0.9$.  In the second scenario, we
scale $1-\mathcal{R}_0$ approximately like $N^{-1/3}$, starting with $0.9$ for
$N=10$.  The results for these two scenarios are pictured in
Figures~\ref{fig:unscaled} and~\ref{fig:scaled} respectively.  These demonstrate
that the asymptotic results can be a good approximation to the system behaviour
for large population sizes (e.g., on the scale of a town, city or country) with
regard to the extinction times, and, for any population of more than a hundred,
also for the mean number of infectives.  Furthermore, they show that for the
unscaled case, extinctions happen relatively quickly for all values of $N$, but
that, as $\cR_0$ tends to 1 with $N$, extinctions can take an extremely long
time to occur despite an initial fast decline in $\mathbb{E} X_N (t)$.

We also give numerical results for the total number of cases $C_N$. For this,
we adapt the path sum numerical method introduced by Ross~(2011); we shall show
that this is suited to rapid calculation not only of the probability mass
function (as in Ross (2011)) but also of the mean variance of $C_N$ (see
\eqref{psmat} and \eqref{8new} below).  To implement this method, we track
the current state as $X_N \times B \in \{ 0, \dots, N\}\times \{0,1\}$, where
the Bernoulli random variable $B$ is defined to be $1$ if the last event was an
infection, and $0$ if the last event was a recovery.  Since we are interested
in the final number of cases, we only need to consider the jump chain for this
process. We will start the system in state $(X_{N,0}, B_0)$.
Writing the state after $u$ events as $(X_{N,u}, B_u)$, we see that
\begin{equation}
C_N = \sum_{u=0}^{\infty} B_u \text{ .}
\end{equation}
(For our model, it is possible to recover $C_N$ from the
total number of events without the need for the auxiliary variable $B$; in more complex
epidemic models, the auxiliary variable aids easy calculation of the quantities of interest.)

The transition probabilities for the jump chain are:
\ba
\mathbb{P}(X_{N,u+1} = X+1, B_{u+1} =1 | X_{N,u} = X, B_u) &=
\begin{cases}
	\frac{\lambda (1-X/N)}{\lambda(1-X/N) + \mu} & \text{if } X>0 \text{,} \\
	0 & \text{otherwise.}
\end{cases} \\
\mathbb{P}(X_{N,u+1} = X-1, B_{u+1} =0 | X_{N,u} = X, B_u) &=
\begin{cases}
	\frac{\mu}{\lambda(1-X/N) + \mu} & \text{if } X>0 \text{,} \\
	0 & \text{otherwise.}
	\label{jumpeqns}
\end{cases}
\ea
Clearly, the state space decomposes into an absorbing
class $\{(0,0), (0,1)\}$ and a transient class $\mathcal{T}$ of states with a positive number of infectives.
For a state $i = (X,B)$, we set $b_i$ equal to $B$.  We write $P_{i,j}$ for the probability of
moving from state $i$ to state $j$ as defined in~\eqref{jumpeqns}. We write
$c_i$ for the expected value of the random variable $C_N$ given
that the initial state is~$i$.  A standard calculation conditioning on the first step then shows that, for each $i \in \mathcal{T}$,
$c_i = \sum_j P_{ij} (c_j + b_j)$.
Let $g_i = c_i + b_i$; then, for each $i$, $g_i = b_i + \sum_j P_{ij} g_j$. It follows that
\begin{equation}
	\mygvec{g} = (\mymat{I} - \mymat{P})^{-1} \myvec{b} \text{ .}
	\label{psmat}
\end{equation}
We note that $g_i$ differs from $c_i$ by at most 1, and only in the case where $b_i =1$.
Note further that: (i)~the inverse in this equation does not need to be calculated
explicitly, and instead a system of linear equations can be solved, for example
using the backslash operator \verb|\| in \textsc{Matlab}, and (ii) we have
restricted attention to the transient states so that the inverse
in~\eqref{psmat} is well-defined.

To study the variability of the distribution of
$C_N$ about its mean, we let $h_i = \mathbb{E} [ C_N(C_N-1) \mid X_N(0) = i]$.
From Ross (2011), if $\phi_i(z) := \E [z^{C_N} | X_N(0) = i]$, then
\begin{equation} 
	\phi_i(z) = z^{b_i} \sum_{j} P_{i,j} \phi_j(z) \qquad (i \in \mathcal{T}) \text{ .}
\end{equation}
We note that $h_i = \phi_i''(1)$ and hence
\begin{equation} 
\sum_{j\in \mathcal{T}} (\delta_{i,j} - P_{i,j}) h_j = 2 b_i
\sum_{j\in \mathcal{T}} P_{i,j} c_j
\qquad (i \in \mathcal{T}) \text{ .}
	\label{8new}
\end{equation}
This equation for $h_i$ can also be evaluated by solving a system of linear
equations and used to calculate the standard deviation of $C_N$.

The results of comparing the path sum to the asymptotic formula for the mean,
as well as the asymptotic variance bound~\eqref{tc-var}, using the same
parameter choices as previously, are shown in Figure~\ref{fig:cns}.  These
results exhibit rapid convergence of the (scaled) mean to its asymptotic value
as the population size $N$ gets large, and also rapid reduction in the
variability of the distribution.

\section{The relationship between the deterministic process and time of extinction}

\label{sec.mean-extinction}

One feature that becomes apparent by studying the numerical results is that,
especially in the barely subcritical regime, there is a clear distinction
between the time that the size of the epidemic first becomes ``small'' (which
is in practice often taken as a proxy for the end of the epidemic) and the time
that extinction occurs with high probability.

In a situation where control measures have brought an epidemic into a subcritical regime, but observations of the
prevalence of the epidemic are only partial, it is potentially important to infer the likely time of extinction from the
existing observations and/or fits to models governed by differential equations, so that control measures can be maintained
for long enough that the epidemic has died out with high probability.

For the SIS logistic process, our results indicate that an appropriate ``guide time'' to the extinction time is
the time $\widehat{t}$ at which the deterministic process, given by (\ref{eq.solution}) and starting from $X_N(0)/N$, reaches $\widehat{x} = \frac{\mu}{(\mu-\lambda)N}$ (i.e., when the number of infectives
is projected to be $\mu/(\mu-\lambda)$).  Note that $\widehat{t}$ occurs significantly later than the time when the deterministic process reaches
$\varepsilon N$, for $\varepsilon$ a small constant, but (in the near-critical regime) considerably earlier than the time when the
deterministic process reaches $1/N$, corresponding to a single remaining infective.

To see that the time $\widehat{t}$ has the desired property, we re-write our result (\ref{eq.general}) in terms of the function $s(x)$
introduced in (\ref{eq.ST}).  We note that
\begin{eqnarray*}
s(X_N(0)/N) &=& \log \big( 1 + \frac{\lambda}{\mu-\lambda} \frac{X_N(0)}{N}\big) - \log (X_N(0)/N) \\
&=& \log\big( 1 + (\mu-\lambda)N/\lambda X_N(0) \big) - \log (\mu-\lambda) + \log \lambda,
\end{eqnarray*}
and deduce that
$$
(\mu-\lambda)T_N - \Big( s(\hat{x}) - s(X_N(0)/N) - \log \big( 1 + \frac{\mu \lambda}{(\mu-\lambda)^2 N} \big) \Big) \to W,
$$
in distribution, as $N \to \infty$.  As $(\mu-\lambda)^2 N \to \infty$, and (from (\ref{eq.ST}))
$\widehat t = \big(s(\widehat x) - s(X_N(0)/N)\big)/(\mu-\lambda)$, this implies that
\begin{equation} \label{eq:widehat}
(\mu-\lambda) (T_N - \widehat{t}) \to W.
\end{equation}
Thus the distribution of $T_N$ is concentrated in a window of width of order $1/(\mu-\lambda)$ around the guide time~$\widehat{t}$.

We now explain briefly how to express the probability of extinction by time~$t$, asymptotically, in terms of the deterministic process $x(t)$.  
For $w$ a fixed constant, set $t_w = \widehat{t} + w/(\mu-\lambda)$.
From~(\ref{eq:widehat}), we have that $\Pr (T_N \le t_w) = e^{-e^{-w}} + o(1)$.
From (\ref{eq.ST}), we see that, for any $\alpha$, and any constant $w$,
$$
t_\alpha(\widehat{x}e^{-w}) - t_\alpha(\widehat{x}) = \frac{s(\widehat{x}e^{-w}) - s(\widehat{x})}{\mu-\lambda} = \frac{w+o(1)}{\mu-\lambda},
$$
noting that $\widehat{x}/(\mu-\lambda) = o(1)$.  It follows that, for any value of $\alpha = x(0)$, and any fixed $w$,
$$
x(t_w) = \widehat{x} e^{-w} (1+o(1)),
$$
and therefore
$$
\exp \big( - N x(t_w) (\mu-\lambda)/\mu \big) = e^{-e^{-w}} + o(1) = \Pr(T_N \le t_w) + o(1).
$$
One can now see that
\begin{equation} \label{eq.relationship}
\sup_t \Big| \Pr(T_N \le t) - \exp \big( - N x(t) (\mu-\lambda)/\mu \big) \Big| \to 0 \quad \mbox{ as } N \to \infty.
\end{equation}

By the time $\widetilde{t} \simeq \widehat{t}$ when the number $X_N(\widetilde{t})$ of infectives has dropped to $\mu/(\mu-\lambda)$, the epidemic 
is well within its final phase, and, as we have shown, after time $\widetilde t$ it is well-approximated by a linear birth-and-death chain, or by 
a subcritical branching process.  The behaviour of such a process is well-understood: once it reaches a level of order $1/(\mu-\lambda)$, it 
fluctuates through states of that order until it goes extinct.

This is also an illustration of the effect of parameter choice on cut-off.  Away from criticality, we have a strong cut-off phenomenon: the extinction
time is concentrated within a window of time much shorter than the overall extinction time, reflecting the idea that, before the window, the process is ``large'' with high probability, and it is unlikely to drop to~0 very quickly.  As we approach criticality, once the process drops to order $\mu/(\mu-\lambda)$, it can (but does not always) stay around that level for a relatively long time, giving a weaker cut-off.

As set out in the Introduction, we expect these findings to extend to a wide class of epidemic models (not only SIS models).
In their final stages, many models will be well-approximated by a barely subcritical branching process independent of population size,
and the nature of this approximating branching process will govern the final stages of the epidemic, for suitable parameter values.  So we expect the prevalence curve of an epidemic to follow the solution of a differential equation closely until the
number of infectives becomes small, and the actual time of extinction to fall in a window of time of width of order $1/\mu(1-\cR_0)$, containing
within it the point where the differential equation predicts the number of infectives to be $1/(1-\cR_0)$.  We expect a strong cut-off for epidemics
away from criticality -- once the process approaches extinction, it goes extinct very quickly -- and a weaker cut-off as we approach criticality.

A typical sample path for a barely subcritical epidemic will resemble the sample paths of the SIS logistic process in Figure~\ref{fig:mc}: they
reach states of order $1/(1-\cR_0) = \mu/(\mu-\lambda)$ following a fairly smooth trajectory, but then fluctuate around that level for a period
of time of order $1/\mu(1-\cR_0) = 1/(\mu-\lambda)$, possibly nearing extinction several times, until finally the epidemic does die out.

To illustrate (\ref{eq.relationship}), we simulated the relationship between the mean prevalence of infection $\mathbb{E} X_N(t)/N$ (as computed 
from the Kolmogorov forward equations), which is very close to the deterministic process $x(t)$), and the probability of extinction 
$\mathbb{P}(X_N(t)=0)$ (as computed from Theorem~\ref{thm.main}).  We used the parameter values $N=X_0 = 10^7$, $\mu = 1$ and
a variety of different values of $\mathcal{R}_0 = \lambda$: the results are shown in Figure~\ref{fig:ItQt}.  

As a first step towards more realistic models, there has been recent interest in a variant of an SIS epidemic where
the durations of each case of infection are iid random variables $Q_i$ with mean~1, not necessarily having an exponential distribution.  Ball, Britton
and Neal~(2016) show that, if the epidemic starts with a single individual, the expected duration of the epidemic does not depend on the
distribution of the $Q_i$.  It would be interesting to investigate whether our results can be extended to this more general setting.

\clearpage

\begin{figure}[h]
\centering
\includegraphics[width = 0.8\textwidth]{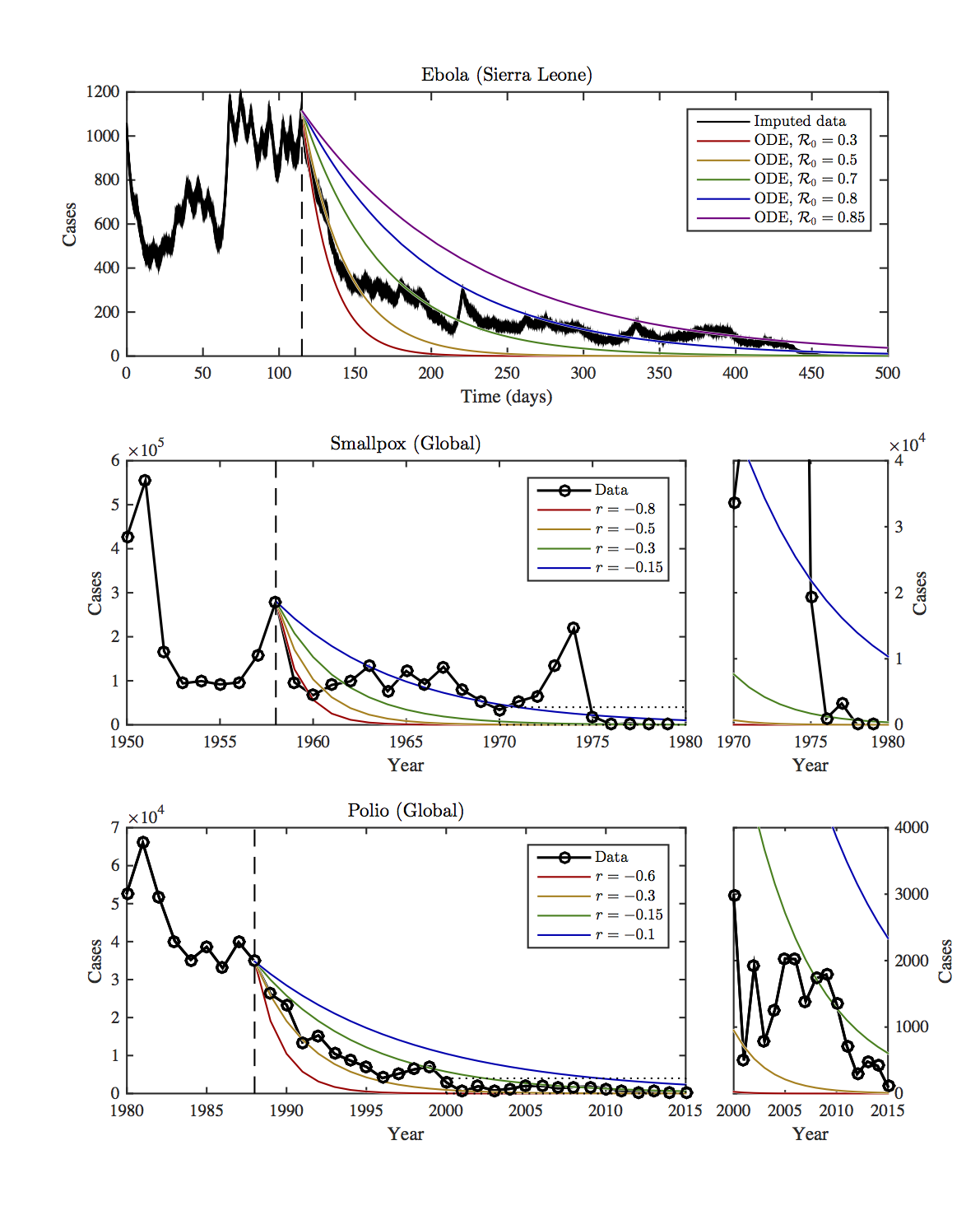}
	\caption{Data relating to diseases placed under control. For smallpox and
		polio the years when official eradication efforts started are indicated
		with dashed vertical lines. For Ebola, the effective start of control is
		estimated by eye and indicated by a dashed vertical line. Exponential decay
		curves with different rates are superimposed on the later part of the data.
        See Appendix for more details.
}
\label{fig:data}
\end{figure}

\clearpage

\begin{figure}[h]
\centering
\includegraphics[width = 0.99\textwidth]{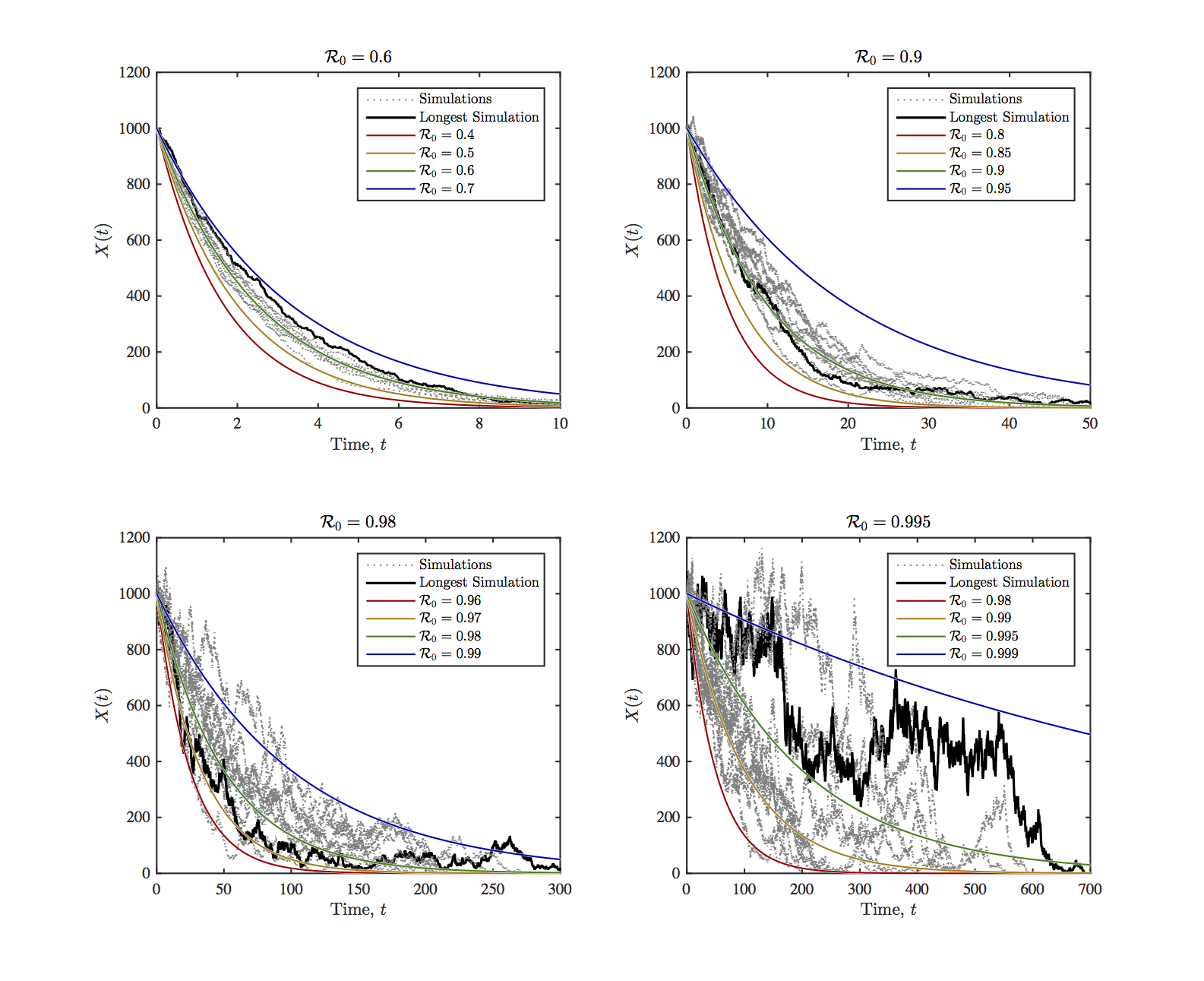}
	\caption{The SIS logistic model at different values of $\cR_0$. Other parameter
		choices are $\mu = 1$, $N = 10^6$ and $X_N(0) = 10^3$. Exponential decay curves
		with different rates are superimposed on ten realisations, with the longest one
	emphasised.
}
\label{fig:mc}
\end{figure}

\clearpage

\begin{figure}[h]
\centering
\includegraphics[width = 0.99\textwidth]{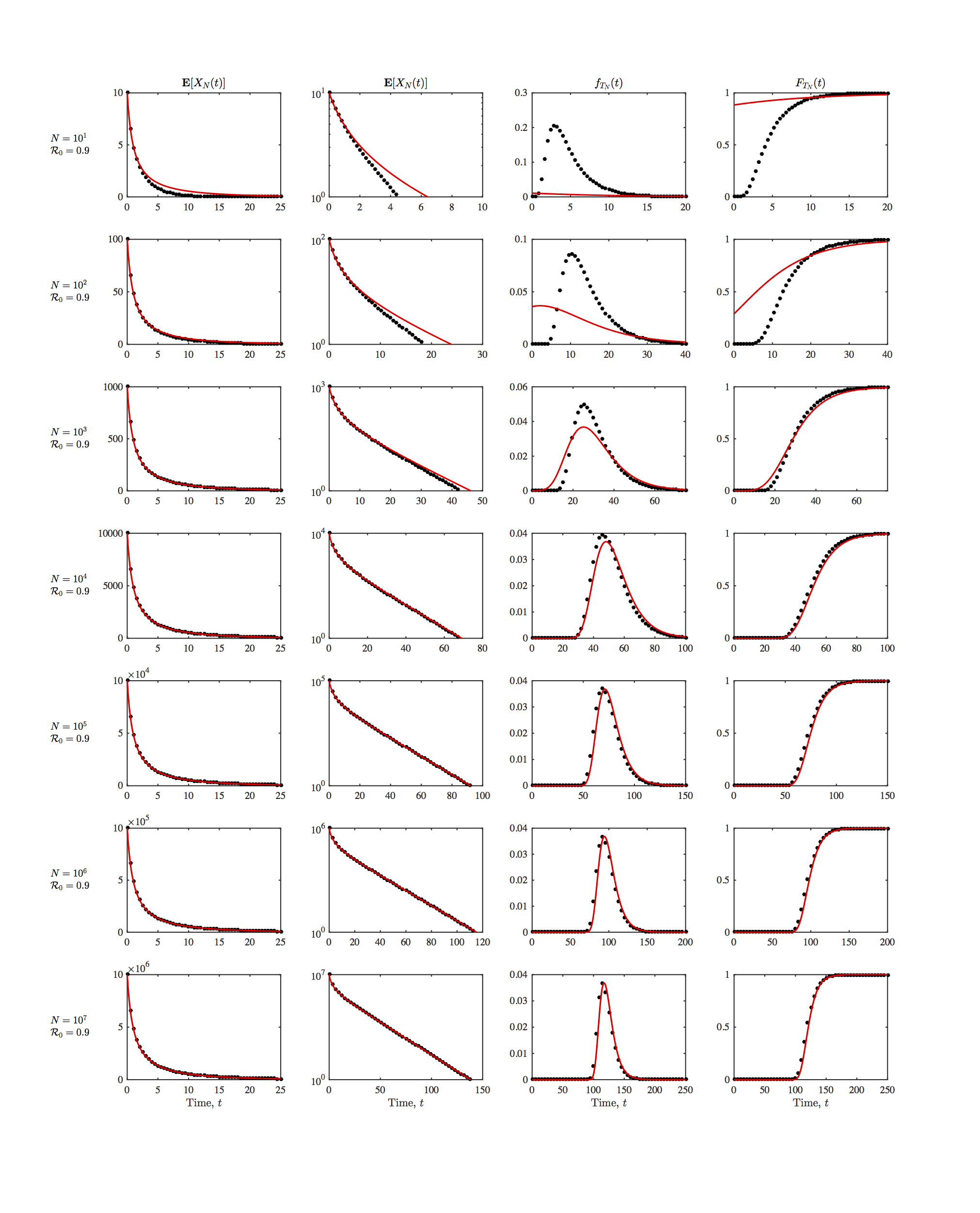}
	\caption{Comparison of simulations of the Kolmogorov forward equations (black dots) with
	asymptotic results (red solid lines). First column: Expected number of
	infectives with a linear $y$-axis.  Second column: Expected number of
	infectives with a logarithmic $y$-axis. Third column: probability density
function for the extinction time. Fourth column: distribution function for the
extinction time. Rows represent different values of $N$ from $10^1$ to $10^7$
with $\mathcal{R}_0$ unscaled.
}
\label{fig:unscaled}
\end{figure}

\begin{figure}[h!]
\centering
\includegraphics[width = 0.99\textwidth]{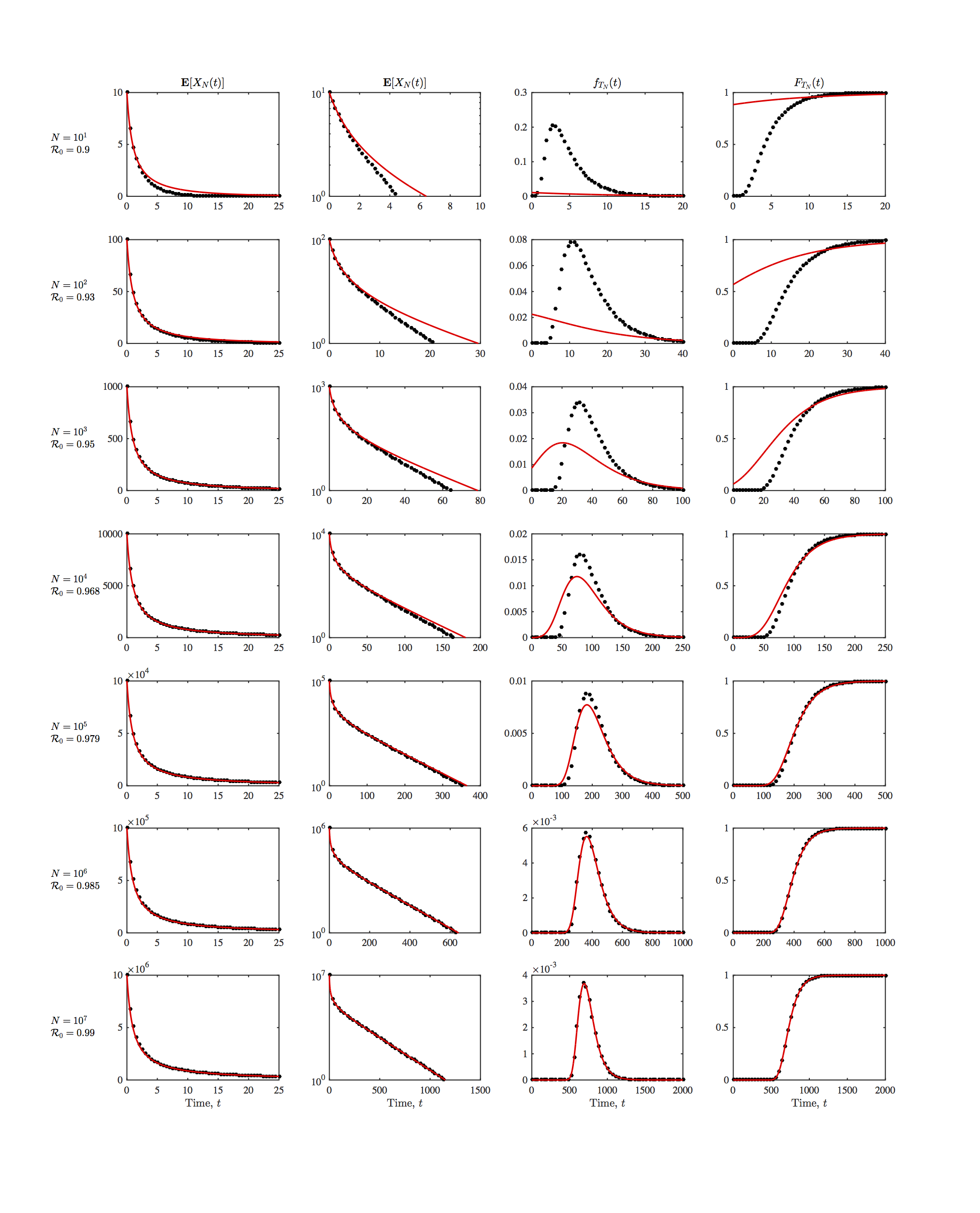}
	\caption{
	Comparison of simulations of the Kolmogorov forward equations (black dots) with
	asymptotic results (red solid lines). First column: Expected number of
	infectives with a linear $y$-axis.  Second column: Expected number of
	infectives with a logarithmic $y$-axis. Third column: probability density
function for the extinction time. Fourth column: distribution function for the
extinction time. Rows represent different values of $N$ from $10^1$ to $10^7$
with $1-\mathcal{R}_0$ scaling approximately like $N^{-1/3}$.
}
\label{fig:scaled}
\end{figure}

\begin{figure}[h!]
\centering
\includegraphics[width = 0.99\textwidth]{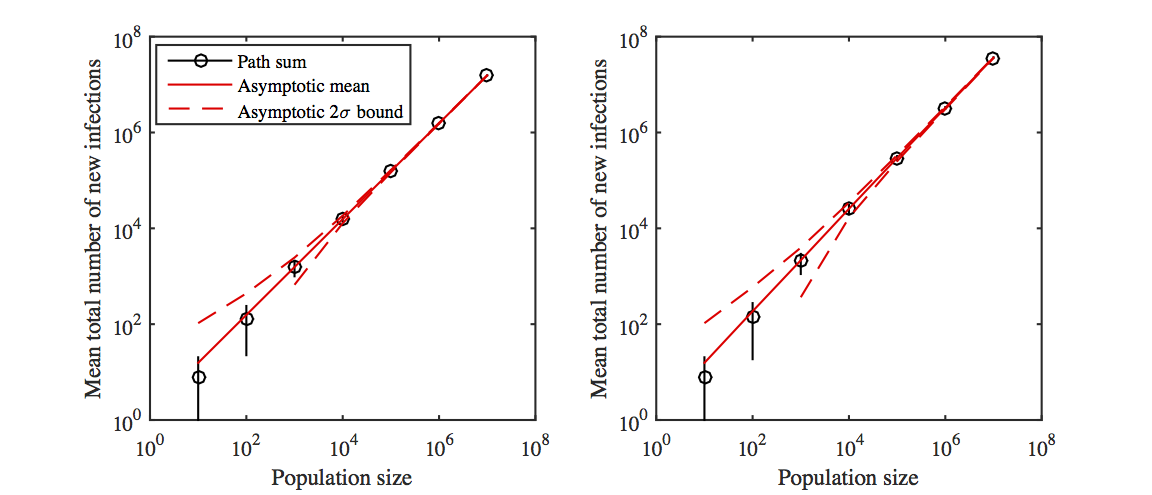}
	\caption{
		Comparison of total number of infections $C_N$ with $N$	for
		(left) the parameter choices in Figure~\ref{fig:unscaled} above and (right)
		the parameter choices in Figure~\ref{fig:scaled} above. Calculations for
		the path sum are shown as black circles, with vertical black lines showing
		$\pm 2$ standard deviations.  The asymptotic formula for large $N$ is shown
		as red lines, with the asymptotic bounds on $\pm 2$ standard deviations shown
		as red dashed lines.
}
\label{fig:cns}
\end{figure}

\begin{figure}[h!]
\centering
\includegraphics[width = 0.99\textwidth]{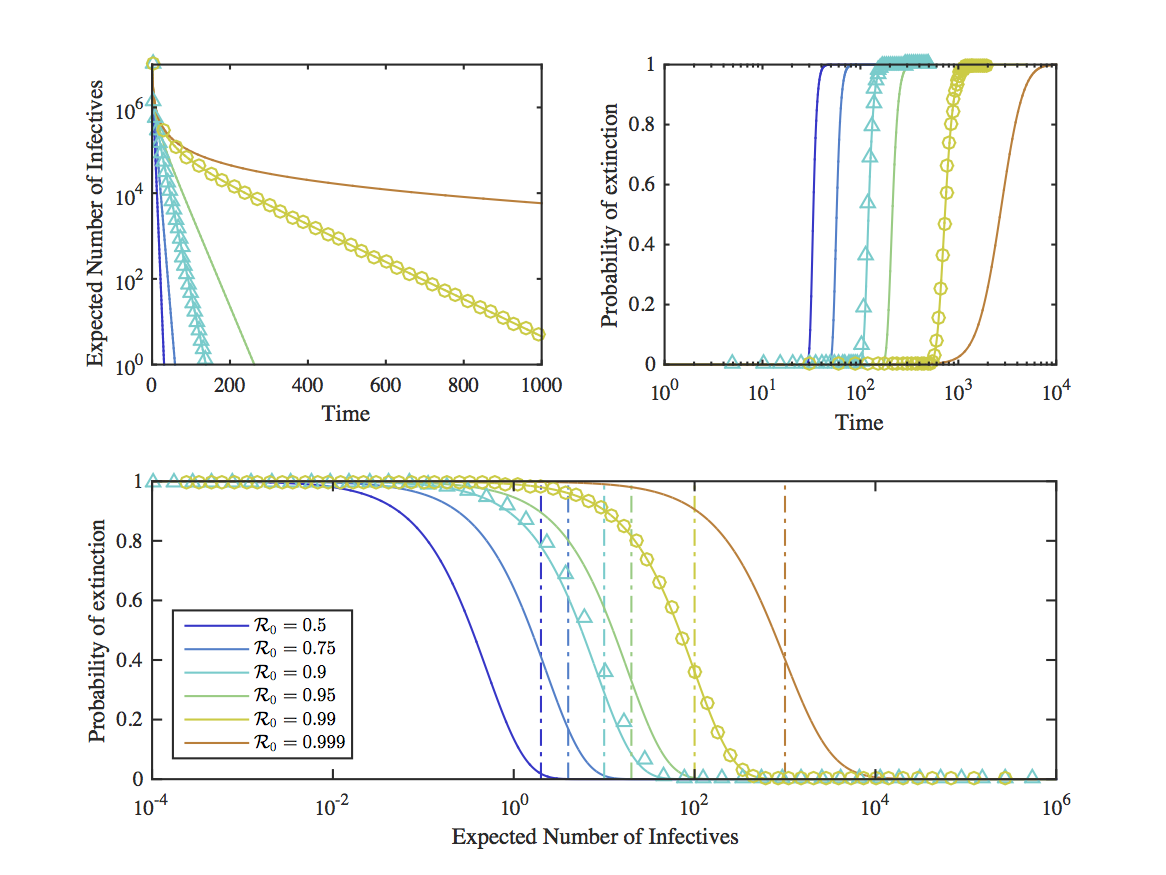}
	\caption{
	Comparison of asymptotic mean number of infectives $\E X_N(t)$ with
	asymptotic probability of extinction $\mathbb{P}(X_N(t)=0)$ (from Theorem~\ref{thm.main}) for
	$N=X_N(0) = 10^7$, at a variety of different values of $\mathcal{R}_0$ for $\mu =
	1$. The non-asymptotic results due to numerical integration of the Kolmogorov forward
	equations from Figures~\ref{fig:unscaled} and~\ref{fig:scaled} are shown as
	markers next to the appropriate asymptotic curves.  The vertical dot-dash lines in the bottom figure show the
    point where $N x = \mu/(\mu-\lambda)$.
}
\label{fig:ItQt}
\end{figure}

\clearpage

\appendix \label{sec:ebola}

\section{Numerical evidence from real epidemics}

We present here data from three real epidemics, pictured in Figure~\ref{fig:data}, where the behaviour near extinction fits our description of a barely
subcritical epidemic.  We make no claim that the real epidemics are well-modelled by any particular stochastic process.  In reality, the available data will
give only a partial picture of the true spread of disease, and the parameter values of any process will vary widely in time and geographical location.

Our first example is a simulation of the Ebola epidemic in 2014/15, based on the real-time study on Ebola in Sierra Leone
performed by Camacho et al~(2015).

The model is a stochastic compartmental model, with individuals in six different classes: $S$ (susceptible), $E_1$ and $E_2$ (two non-infectious latent
classes), $I_c$ (infectious cases in the community), $I_h$ (hospitalised infectious cases) and $R$ (removed).  The transitions can be
encoded as:
\begin{eqnarray*}
(S, E_1) &\to& (S-1,E_1+1) \quad \mbox{ at rate } \quad \beta (I_c + I_h), \\
(E_1, E_2) &\to& (E_1-1,E_2+1) \quad \mbox{ at rate } \quad 2 \nu E_1, \\
(E_2, I_c) &\to& (E_2-1,I_c+1) \quad \mbox{ at rate } \quad 2 \nu E_2, \\
(I_c, I_h) &\to& (I_c-1,I_h+1) \quad \mbox{ at rate }\quad \tau I_c, \\
(I_h, R) &\to& (I_h-1,R+1) \quad \mbox{ at rate } \quad \gamma I_h.
\end{eqnarray*}
For instance, the first line represents an individual moving from class $S$ to class $E_1$ at rate proportional to the number of infectives: we have
simplified the model for our purposes by assuming that the number of susceptibles is constant throughout, and that the infection rate $\beta$ is
constant throughout the epidemic.

The parameter values were fitted to data by Camacho et al~(2015) using a computationally intensive statistical framework in which
$\beta$ varies over time: the overlaid curves in Figure~\ref{fig:data} show that fixed $\beta$ --
and therefore fixed $\cR_0 = \beta(\gamma^{-1} + \tau^{-1})$ -- does not work.
The values derived from the data are $\nu^{-1} = 9.4$ days (so the latent period has a 2-Erlang distribution with mean $\nu^{-1}$),
$\tau^{-1} = 4.3$ days and $\gamma^{-1} = 6.9$ days.

For the simplified model, the population means in each compartment obey the ODEs
\ba
\ddt{\E X_{E_1}} & = \beta (\E X_{I_c} + X_{I_h}) - 2\nu \E X_{E_1} \text{ ,} &
\ddt{\E X_{E_2}} & = 2\nu(\E X_{E_1} - \E X_{E_2}) \text{ ,}\\
\ddt{\E X_{I_c}} & = 2\nu \E X_{E_2} - \tau \E X_{I_c} \text{ ,} &
\ddt{\E X_{I_h}} & = \tau \E X_{I_c} - \gamma \E X_{I_h} \text{ .} \label{ebodes}
\ea

Based on this model, we generated estimates for the total number of infectives of Ebola over time.
We took publicly available data on cumulative incidence for Ebola in Sierra
Leone, and assumed: (i)~that individual cases moved from $I_c$ to $I_h$ at some time $t$
with uniform probability density in the day before they are reported; (ii)~that
the latent period ended at a time $t-t_1$, where $t_1$ is exponentially distributed with
rate~$\tau$; (iii)~that the infectious period ended at a time $t + t_2$, where $t_2$ is
exponentially distributed with rate~$\gamma$.

Simulating from this process ten times gives the black lines for the number of cases over time
in Figure~\ref{fig:data} (top plot), which are also compared to numerical solutions to~\eqref{ebodes} for different values of $\cR_0$,
shown as coloured lines.


Our other two examples are broad-brush pictures of the courses of well-known epidemics, where we present simply the number of recorded cases in each year.
These examples are smallpox (Figure~\ref{fig:data}, middle plot), which
was subject to a successful global eradication campaign, and polio
(Figure~\ref{fig:data}, bottom plot), which is currently subject to a global
eradication campaign that will hopefully be successful soon.  In these cases, we do
not compare to specific ODE models, but instead to exponential decay curves of
the form $e^{-r t}$, for various values of $r$, which are shown as coloured lines.

For all three real examples, we see that the duration of the epidemics after they have been brought under control is longer than might be predicted 
from the smooth curves, which would be associated with straightforwardly subcritical epidemics with much smaller extinction times, of order $\log N/\mu$.  
We also see that the prevalence does not decay smoothly in time in its final stages, but goes up and down several times before extinction.

We take the smallpox data as an example, to show how the data might be seen to fit our predicted behaviour for a barely subcritical process in a rough
quantitative sense.
The highly infectious period for smallpox is on the order of a week, which corresponds to a value of $\mu^{-1}$
around 0.02 years.  The middle graph in Figure~1 shows the total number of cases in each year, which is consistent with the number of cases remaining of
order about 2000 over the period 1958-1973.  Aggregating cases over full years obscures any erratic behaviour within each year.  It seems clear that
the effective value of $\cR_0$ for the smallpox epidemic must have varied considerably over that period, due to seasonal effects, changes in control
measures both globally and in response to local outbreaks, and other factors.  However, the total number of cases of smallpox over the 15-year period
is around two million, and the total number of cases at any time in 1973 is within a few thousand of the number in 1958, indicating that the average
value of $\cR_0$ over this period is within about 0.001 of~1.  For a postulated average value of $\cR_0$ of 0.999, our hypothesis
would suggest a period of time of order 20 years before extinction, during which the number of cases would resemble a random walk remaining of order
at most 1000, which is a reasonable fit to the data shown.

\end{document}